\documentclass[12pt, twosided]{article}

\usepackage{enumerate}
\usepackage{amsmath}
\usepackage{amssymb,latexsym}
\usepackage{amsthm}
\usepackage{color}
\usepackage{graphicx}
\usepackage{hyperref}
\usepackage{amscd}
\usepackage[normalem]{ulem}

\DeclareMathOperator{\Tr}{Tr}
\DeclareMathOperator{\N}{N}


\title{Intersecting families of graphs of functions over a finite field}
\author{Angela Aguglia \footnote{Dipartimento di Meccanica, Matematica e Management, Politecnico di Bari, Via Orabona 4, I-70125 Bari, Italy; \tt{angela.aguglia@poliba.it}}
	\and Bence Csajb\'ok \footnote{Dipartimento di Meccanica, Matematica e Management, Politecnico di Bari, Via Orabona 4, I-70125 Bari, Italy; \tt{bence.csajbok@poliba.it}}
	\and Zsuzsa Weiner \footnote{ELKH--ELTE Geometric and Algebraic Combinatorics Research Group, 1117 Budapest, P\'azm\'any P.\ stny.\ 1/C, Hungary and 	Prezi.com,		H-1065 Budapest, Nagymez\H o utca 54-56, Hungary; \tt{zsuzsa.weiner@gmail.com}}
	\thanks{The second and the third author acknowledge the support of the National Research, Development and Innovation Office – NKFIH, grant no. K 124950. This work was supported by the Italian National Group for Algebraic and Geometric Structures and their Applications (GNSAGA– INdAM).}}
\date{}

\newcommand{\cA}{{\mathcal A}}
\newcommand{\cB}{{\mathcal B}}

\newcommand{\cC}{{\mathcal C}}

\newcommand{\cM}{{\mathcal M}}

\newcommand{\cF}{{\mathcal F}}
\newcommand{\cP}{{\mathcal P}}

\newcommand{\cQ}{{\mathcal Q}}
\newcommand{\cS}{{\mathcal S}}

\newcommand{\F}{{\mathbb F}}

\newcommand{\GF}{\hbox{{\rm GF}}}

\newtheorem{theorem}{Theorem}[section]

\newtheorem{lemma}[theorem]{Lemma}
\newtheorem{corollary}[theorem]{Corollary}
\newtheorem{definition}[theorem]{Definition}

\newtheorem{result}[theorem]{Result}
\newtheorem{example}[theorem]{Example}

\newtheorem{remark}[theorem]{Remark}

\DeclareMathOperator{\PG}{{PG}}
\DeclareMathOperator{\AG}{{AG}}

\textwidth=15cm

\textheight=23cm

\oddsidemargin=-0.5cm

\evensidemargin=-0.5cm

\topmargin=-0.5cm

\begin{document}
	\maketitle
	
	\begin{abstract}
		Let $U$ be a set of polynomials of degree at most $k$ over $\F_q$, the finite field of $q$ elements. Assume that $U$ is an intersecting family, that is, the graphs of any two of the polynomials in $U$ share a common point.
		Adriaensen proved that the size of $U$ is at most $q^k$ with equality if and only if $U$ is the set of all polynomials of degree at most $k$ passing through a common point. In this manuscript, using a different, polynomial approach, we prove a stability version of this result, that is, the same conclusion holds if $|U|>q^k-q^{k-1}$. We prove a stronger result when $k=2$.
		
		For our purposes, we also prove the following results. If the set of directions determined by the graph of $f$ is contained in an additive subgroup of $\mathbb{F}_q$, then the graph of $f$ is a line. If the set of directions determined by at least $q-\sqrt{q}/2$ affine points is contained in the set of squares/non-squares plus the common point of either the vertical or the horizontal lines, then up to an affinity the point set is contained in the graph of some polynomial of the form $\alpha x^{p^k}$.
		
		
	\end{abstract}

	\section{Introduction}
	In 1961, Erd\H os et al.\ \cite{Erdos} proved that if $F$ is a $k$-uniform intersecting family of
	subsets of an $n$-element set $X$, then $|F| \leq  \binom{n-1}{k-1}$ when $2k \leq  n$. Furthermore, they proved that if $2k+1 \leq n$, then equality holds if and only if $F$ is the family of all subsets containing a fixed element $x \in X$.
	There are several versions of the Erd\H os-Ko-Rado theorem.  For a survey of this type of results, see
	\cite{Frankl}, \cite{Tokushige} or \cite{BlokhuisAtAl}.
	
	In this manuscript, we investigate an Erd\H os-Ko-Rado type problem for graphs of functions over a finite field.
	The idea of this work comes from the recent manuscript \cite{Sam} by Adriaensen, where the author studies intersecting families of ovoidal circle geometries and, as a consequence, of graphs of functions over a finite field.

	\begin{definition}
		If $f$ is an $\F_q \rightarrow \F_q$ function, then the graph of $f$ is the affine $q$-set:
		\[U_f=\{(x,f(x)) : x \in \F_q\}.\]
		The set of directions determined by (the graph of) $f$ is
		\[D_f=\left\{ \frac{f(x)-f(y)}{x-y} : x,y \in \F_q,\, x\neq y \right\}.\]
	\end{definition}
	
	\begin{definition}
		For a family of polynomials, $U$, we say that $U$ is $t$-intersecting if for any two polynomials $f_1, f_2 \in U$, the graphs of $f_1$ and $f_2$ share at least $t$ points, that is,
		\[|\{ (x,f_1(x)) : x \in \F_q\} \cap \{ (x,f_2(x)) : x \in \F_q\}|\geq t.\]
		Instead of $1$-intersecting, we will also use the term ``intersecting''.
		
	\end{definition}
	Note that if $U$  is a $t$-intersecting family of polynomials of degree at most $k$, then also
	\[|\{ (x,f_1(x)) : x \in \F_q\} \cap \{ (x,f_2(x)) : x \in \F_q\}|\leq k\]
	holds for any pairs $f_1,f_2\in U$, since $(x,f_1(x))=(x,f_2(x))$ implies that $x$ is a root of $f_1-f_2$ which has degree at most $k$.

	In this note, we improve a result  due to  Adriaensen {\cite[Theorem 6.2]{Sam}} by using different techniques. Adriaensen's proof goes through association schemes and circle geometries, our proof does not use these, as we rely on two classical results (Results \ref{res1} and \ref{res2}) about polynomials over finite fields.
	
	To be more precise, our main results are the following theorems.
	
	\begin{theorem}
		\label{degtwo}
		Let $U$ be a set of intersecting polynomials of degree  $k\leq 2$ over $\F_q$. Assume that $q \geq 53$, when $q$ is odd and $q \geq 8$ when $q$ is even. If
		\[|U|>q^2-\frac{q\sqrt{q}}{4} + \frac{cq}{8} + \frac{\sqrt q}{8},\]
		where $c=1$ for $q$ even and $c=3$ for $q$ odd,  then the graphs of the functions in $U$ share a common point.
	\end{theorem}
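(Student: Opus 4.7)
The plan is to argue by contradiction: suppose $U$ is an intersecting family of polynomials of degree at most $2$ with $|U|$ above the stated threshold, and that the graphs of the polynomials in $U$ do not all pass through a common point. For each affine point $P\in\AG(2,q)$ set $m_P:=|\{f\in U:f(P_x)=P_y\}|$, so $\sum_P m_P=q|U|$. Pick $P_0$ attaining the maximum $M=m_{P_0}$; since $U$ is not a pencil, $M<|U|$, and there is at least one $f^\star\in U$ whose graph avoids $P_0$. Split $U=V\sqcup W$ with $V=U\cap\Pi_{P_0}$ the pencil part and $W=U\setminus V$, so $|V|=M$ and $|W|=|U|-M\geq 1$.

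The first quantitative ingredient is a discriminant count bounding $M$. I parametrise $\Pi_{P_0}$ by $g(x)=P_{0,y}+(x-P_{0,x})(sx+t)$, $(s,t)\in\F_q^2$. For a fixed $f\in W$, set $F(x)=f(x)-P_{0,y}$; then $R:=F(P_{0,x})\neq 0$, and $f$ and $g$ intersect iff the quadratic $F(x)-(x-P_{0,x})(sx+t)=0$ has a root in $\F_q$. A linear change of variables in $(s,t)$ brings its discriminant into the shape $u^2-4vR$, and a direct count using quadratic characters in odd characteristic (resp.\ the trace form in characteristic two) shows that exactly $(q-1)(q+2)/2$ of the $q^2$ members of $\Pi_{P_0}$ are compatible with $f$. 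Since every $g\in V$ must be compatible with every $f\in W$, this yields $M\leq (q-1)(q+2)/2$.

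The sharp stability then comes from extracting further structure from the ``forbidden'' sets $S_f\subset\Pi_{P_0}$ of size $(q^2-q+2)/2$ attached to each $f\in W$: by the previous step $V\cap\bigcup_{f\in W}S_f=\emptyset$, so
\[
|V|\leq q^2-\Bigl|\bigcup_{f\in W}S_f\Bigr|.
\]
Via the parametrization, each $S_f$ is controlled by the direction set of the graph of $f$. I would then combine the two classical polynomial results cited as Results~\ref{res1} and~\ref{res2} with the direction-in-additive-subgroup theorem announced in the abstract: if too many of the $S_f$ overlap, the shared combinatorics forces the direction sets $D_f$ into a proper additive subgroup of $\F_q$, which by the abstract's theorem forces the graph of $f$ to be a line, a degenerate case treated by direct counting. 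A quantitative version of this should give a lower bound on $|\bigcup_{f\in W}S_f|$ of the order $q\sqrt{q}/4$, and plugging this into the displayed inequality together with $|U|=|V|+|W|$ contradicts the hypothesis.

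The main obstacle is this overlap estimate. A naive union bound is useless since each $S_f$ already has size $\approx q^2/2$; one needs a R\'edei or character-sum argument showing that the family $\{S_f\}_{f\in W}$ cannot cluster too tightly inside $\Pi_{P_0}$, and this is where Results~\ref{res1} and~\ref{res2} should intervene, with the direction-in-subgroup lemma acting as an exclusion principle that rules out degenerate clustering. The two values $c=1$ ($q$ even) and $c=3$ ($q$ odd) in the threshold ultimately arise from carrying the squares vs.\ non-squares trichotomy, respectively the trace-zero vs.\ trace-nonzero split, through the discriminant count of the second paragraph.
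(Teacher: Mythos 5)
Your proposal diverges from the paper's argument from the very first step: the paper never fixes a most popular point. Instead it parametrises $U$ by the coefficient pairs $(b,c)$ of $x$ and $x^2$ (which are distinct by Lemma \ref{kdeg}), writes each member as $f(b,c)+bx+cx^2$, and studies the constant term $f(b,c)$ as a function along the fibres $C_b=\{c:(b,c)\in D\}$ (odd $q$), resp.\ $B_t$ with $c=b+t$ (even $q$). The intersecting condition translates into a statement that certain difference quotients of these \emph{auxiliary} functions are squares (resp.\ have trace zero), and it is to these auxiliary functions --- not to the graphs of the quadratics themselves --- that Sz\H onyi's extendability theorem, Result \ref{res2}, Theorem \ref{additiveCarlitz} and Weil's bound are applied. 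A different route is of course legitimate, but yours has a genuine gap that you yourself flag: the ``overlap estimate'' in the third paragraph is never established, and the entire conclusion rests on it. Everything after ``I would then combine'' is a hope, not an argument.

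Moreover, the mechanism you propose for closing that gap cannot work as stated. You want to force the direction sets $D_f$ of the graphs of the polynomials $f\in W$ into a proper additive subgroup and then invoke the abstract's theorem to conclude that such $f$ are lines. But for a genuine quadratic $f(x)=cx^2+bx+a$ with $c\neq 0$ one has $D_f=\{c(x+y)+b: x\neq y\}$, which equals all of $\F_q$ when $q$ is odd and $\F_q\setminus\{b\}$ when $q$ is even; in neither case is it ever contained in a proper additive subgroup, so the ``exclusion principle'' has nothing to exclude. The subgroup/direction machinery in the paper bites only because it is applied to the functions $c\mapsto f(b,c)$ (equivalently $R_t(x)=f(x,x+t)$), whose direction sets genuinely land in the non-squares times $-1$, resp.\ in $\ker\Tr_{q/2}$. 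Separately, your first-step count is off in odd characteristic: a fixed $f$ avoiding $P_0$ is compatible with $(q-1)(q+3)/2$ members of the pencil, not $(q-1)(q+2)/2$ (the latter is the even-$q$ count); this is harmless for the order of magnitude but indicates the discriminant bookkeeping, from which you expect the constants $c=1,3$ to emerge, has not actually been carried out. As it stands the proposal is a plausible-sounding programme whose decisive step is missing and whose proposed key lemma is misapplied.
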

	
	We will prove Theorem \ref{degtwo}	separately for $q$ odd (Theorem \ref{deg2dispari}) and for $q$ even (Theorem \ref{deg2p}).
	For $k>2$, the proof can be finished by induction as in \cite[pg. 33]{Sam}. We obtain the following result.
	
	\begin{theorem}
		\label{general}
		If $U$ is a set of more than $q^{k}-q^{k-1}$ intersecting polynomials over $\F_q$, $q \geq 53$ when $q$ is odd and $q \geq 8$ when $q$ is even, and of degree at most $k$, $k > 1$, then there exist $\alpha,\beta \in \F_q$ such that $g(\alpha)=\beta$ for all $g\in U$. Furthermore, $U$  can be uniquely extended to a family of  $q^k$ intersecting polynomials over $\F_q$ and of degree at most $k$.
	\end{theorem}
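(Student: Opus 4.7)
The plan is to induct on $k \geq 2$, with base case $k = 2$ supplied by Theorem \ref{degtwo}; for those $q$ where the threshold in Theorem \ref{degtwo} happens to exceed $q^2 - q$, one can invoke the sharper stability result of Adriaensen cited in the abstract.

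For the inductive step, I would partition $U$ according to the coefficient of $x^k$: writing $W_c := \{g \in U : [x^k] g = c\}$ for each $c \in \F_q$, the hypothesis $|U| > q^k - q^{k-1} = q\cdot (q^{k-1} - q^{k-2})$ together with pigeonhole yields some $c_0 \in \F_q$ with $|W_{c_0}| > q^{k-1} - q^{k-2}$. The translated family $W'_{c_0} := \{g - c_0 x^k : g \in W_{c_0}\}$ then consists of polynomials of degree at most $k - 1$, has the same cardinality as $W_{c_0}$, and is still intersecting (pairwise differences are unchanged), so the induction hypothesis provides a common point $(\alpha, \beta')$ of $W'_{c_0}$. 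Translating back, every $g \in W_{c_0}$ passes through $P := (\alpha,\, \beta' + c_0 \alpha^k)$.

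To propagate $P$ to all of $U$, I would parameterize $W_{c_0}$ as $g(x) = c_0 x^k + r(x)$ with $\deg r \leq k - 1$ and $r(\alpha) = \beta'$, leaving exactly $q^{k-1}$ admissible $r$. If some $g' \in U \setminus W_{c_0}$ failed to pass through $P$, then for every $g \in W_{c_0}$ an intersection point $x_0$ of $g$ and $g'$ would have to satisfy $x_0 \neq \alpha$, forcing $r(x_0) = g'(x_0) - c_0 x_0^k$. Each such linear constraint cuts the $q^{k-1}$ choices of $r$ down to $q^{k-2}$, so the union bound over the $q - 1$ possibilities $x_0 \in \F_q \setminus \{\alpha\}$ yields at most $(q - 1) q^{k-2} = q^{k-1} - q^{k-2}$ admissible polynomials $r$, contradicting the lower bound on $|W_{c_0}|$. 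Hence $g'$ must pass through $P$, and so $U$ has a common point.

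The uniqueness of the extension to a family of $q^k$ intersecting polynomials follows from the cited theorem of Adriaensen, according to which any intersecting family of size $q^k$ of polynomials of degree at most $k$ is the set of all such polynomials through a common point; combined with the observation that $|U| > q^{k-1}$ prevents $U$ from being contained in the intersection of two distinct fibers (of size only $q^{k-1}$), the common point $P$ must coincide with the common point of any such extension. The only real delicacy is the base case; the inductive step itself is a clean pigeonhole-and-count and presents no genuine obstacle.
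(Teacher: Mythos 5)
Your argument is correct and follows essentially the same route as the paper: induction on $k$ with a pigeonhole on the $x^k$-coefficient, reduction of the large fibre to degree $k-1$, propagation of the common point to the rest of $U$ via a $(q-1)q^{k-2}$ count (your interpolation count on the polynomials $r$ is exactly the content of the paper's Lemma \ref{equalc}, there phrased as factoring the differences $g-h_i$ as $(x-u)(dx^{k-1}+\dots-w/u)$), and uniqueness from the fact that at most $q^{k-1}$ polynomials of degree at most $k$ pass through two fixed points. Your caveat about the base case is a genuine and worthwhile observation: for small $q$ (e.g.\ odd $q<37$ or even $q\le 16$) the threshold in Theorem \ref{degtwo} does exceed $q^2-q$, so the paper's bare citation of Theorem \ref{degtwo} leaves a gap there that your appeal to Adriaensen's stronger stability result (or an added hypothesis on $q$) would close.
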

	
	
	While finalizing our manuscript, a stronger stability version of the above mentioned result for $k=2$ was published by Adriaensen; see \cite{Sam2}.
	Our proof is different, based on polynomials and hence might be of independent interest.

	\section{Preliminaries}
	
	Throughout this  paper, $q=p^n$ for some prime $p$ and a positive integer $n$.
	The algebraic closure of the finite field $\F_q$ will be denoted by $\overline{\F}_q$.

	The absolute trace function is defined as $\Tr_{q/p} \colon \F_q \rightarrow \F_p$, $\Tr_{q/p}(x)=x+x^p+\ldots+x^{p^{n-1}}$.
	Recall that for $q$ even and $c\neq 0$, $a+bx+cx^2\in \F_q[x]$ has a root in $\F_q$ if and only if $b=0$, or $b\neq 0$ and
	\begin{equation}
		\label{trace}
		\Tr_{q/2}\left( \frac{ac}{b^2}\right)=0.
	\end{equation}
	When $q$ is a square, we will also use the notation $\N \colon \F_{q} \rightarrow \F_{\sqrt{q}}$, $x \mapsto x^{\sqrt{q}+1}$, which is the norm of $x$ over $\F_{\sqrt{q}}$.
	
We will frequently need the following result of Ball, Blokhuis, Brouwer, Storme, Sz\H onyi and Ball.

\begin{result}[Part of \cite{SB, SB2}]
	\label{BBBSSz}
	Let $f$ be an $\F_q \rightarrow \F_q$ function such that $|D_f|\leq(q+1)/2$. Then $U_f$ is affinely equivalent to the graph of a linearised polynomial, that is, a polynomial of the form $\sum_{i=0}^{n-1}a_i x^{p^i} \in \F_q[x]$. 
\end{result}

	\begin{theorem}
		\label{additiveCarlitz}
		Let $U$ denote a proper $\F_p$-subspace of $\F_q$, $q=p^n>2$, $p$ prime and consider a function $\sigma \colon \F_q \rightarrow \F_q$. If the set of directions
		\[D_\sigma=\left\{  \frac{\sigma(x)-\sigma(y)}{x-y} : x,y\in \F_q,\, x\neq y \right\}\]
		is contained in $U$, then $\sigma(x)=ax+b$ for some $a,b\in \F_q$.
	\end{theorem}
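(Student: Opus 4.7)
The plan is to reduce to the case where $\sigma$ is $\F_p$-additive via a classical direction-counting theorem, and then force all non-linear Frobenius terms to vanish by a trace identity. After replacing $\sigma$ by $\sigma-\sigma(0)$ I may assume $\sigma(0)=0$. Since $U$ is a proper $\F_p$-subspace, $|D_\sigma|\leq |U|\leq q/p$, which is strictly less than $(q+3)/2$ whenever $q>2$. I will invoke the Blokhuis--Ball--Brouwer--Storme--Sz\H{o}nyi bound on the number of directions determined by the graph of a function to conclude that the graph of $\sigma$ is $\F_{p^e}$-linear for some divisor $e$ of $n$; in particular $\sigma$ is $\F_p$-additive, so I may write
\[
\sigma(x)=\sum_{i=0}^{n-1} c_i x^{p^i},\qquad c_i\in\F_q.
\]

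Next, since $U$ is a proper $\F_p$-subspace, choose $\mu\in\F_q^*$ with $\Tr_{q/p}(\mu u)=0$ for every $u\in U$. Additivity gives $D_\sigma=\{\sigma(z)/z:z\in\F_q^*\}$, so the hypothesis rewrites as $\Tr_{q/p}(\mu\sigma(z)/z)=0$ for all $z\in\F_q^*$. Substituting $\sigma(z)/z=\sum_i c_i z^{p^i-1}$ and expanding $\Tr_{q/p}(y)=\sum_{j=0}^{n-1} y^{p^j}$ yields the polynomial identity
\[
P(z):=\Tr_{q/p}(\mu c_0)+\sum_{i=1}^{n-1}\sum_{j=0}^{n-1}(\mu c_i)^{p^j}\, z^{(p^{i+j}-p^j)\,\bmod\,(q-1)}\equiv 0 \quad\text{on }\F_q^*,
\]
where each exponent is reduced to $\{0,1,\ldots,q-2\}$.

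The crux is to show that these reduced exponents are pairwise distinct as $(i,j)$ ranges over $\{1,\ldots,n-1\}\times\{0,\ldots,n-1\}$ (and that all lie in $\{1,\ldots,q-2\}$). A short base-$p$ digit calculation shows that $(p^{i+j}-p^j)\bmod(q-1)$ has $p$-adic expansion consisting of exactly $i$ consecutive digits equal to $p-1$, cyclically starting at position $j$ modulo $n$, with every other position zero. The block length $i$ and starting position $j$ are thus uniquely recoverable from the pattern, proving distinctness. Consequently $P$ is a polynomial of reduced degree at most $q-2$ that vanishes at all $q-1$ points of $\F_q^*$, hence is identically zero. Every coefficient $(\mu c_i)^{p^j}$ with $i\geq 1$ therefore vanishes, and since $\mu\neq 0$ and Frobenius is injective this forces $c_i=0$ for $1\leq i\leq n-1$. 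Restoring the initial translation gives $\sigma(x)=c_0 x+b$ with $b=\sigma(0)$, as required.

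The principal obstacle is the combinatorial verification that the above exponents are pairwise distinct; the rest is routine bookkeeping after the direction theorem delivers additivity. A secondary point worth checking is that the Blokhuis--Ball--Brouwer--Storme--Sz\H{o}nyi reduction really applies in all the small-$n$ boundary cases, but since $q/p<(q+3)/2$ for every prime $p$ and every $q>2$, and the case $n=1$ (where the only proper $\F_p$-subspace is $\{0\}$, forcing $\sigma$ constant) is handled directly, the hypotheses of that theorem are met.
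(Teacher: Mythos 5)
Your proof is correct and follows essentially the same route as the paper: reduce to the $\F_p$-additive case via the Ball/Ball--Blokhuis--Brouwer--Storme--Sz\H{o}nyi direction theorem, translate the hypothesis into $\Tr_{q/p}(\mu\,\sigma(z)/z)=0$ on $\F_q^*$, and kill the nonlinear Frobenius coefficients by showing the relevant exponents are pairwise distinct. The one genuine difference is in that last step: you reduce $p^{i+j}-p^j$ modulo $q-1$ and identify each residue by its base-$p$ digit pattern (a cyclic block of $i$ digits equal to $p-1$ starting at position $j$, which determines $(i,j)$ uniquely since $1\leq i\leq n-1$ leaves at least one zero digit), giving a polynomial of degree at most $q-2$ with $q-1$ roots; this treats $p=2$ and $p>2$ uniformly, whereas the paper instead multiplies by $x^{1+p+\cdots+p^{n-1}}$ and is forced into a separate, more delicate exponent bookkeeping for $p=2$. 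Your normalization is the cleaner of the two, and I verified the digit-block claim is sound; no gaps.
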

	\begin{proof}
		First note that $U$ is contained in an $(n-1)$-dimensional $\F_p$-subspace $V$ and hence $|D_\sigma|\leq p^{n-1}$.
		Then by Result \ref{BBBSSz}, $\sigma(x)=\alpha+g(x)$, where $\alpha\in \F_q$ and $g(x)=\sum_{i=0}^{n-1}b_ix^{p^i}\in \F_q[x]$, thus
		\[D_{\sigma}=\left\{ \frac{g(x)}{x} : x\in \F_q\setminus \{0\}\right\}.\]
		It is well-known that $\beta^{q/p}\prod_{\gamma\in V}(x-\gamma)=\Tr_{q/p}(\beta x)$ for some $\beta\in \F_q\setminus\{0\}$. Next define $f(x):=\beta g(x)$. Then $D_{\sigma} \subseteq V$ implies
		\[\Tr_{q/p}\left(\frac{f(x)}{x}\right)=0\]
		for each $x\in \F_q \setminus \{0\}$.
		To prove the assertion, it is enough to prove that $f(x)$ is linear.
		With $f(x)=\sum_{i=0}^{n-1}a_i x^{p^i}\in \F_q[x]$,
		\[\Tr_{q/p}\left(\frac{f(x)}{x}\right)=\Tr_{q/p}\left(\sum_{i=0}^{n-1}a_i x^{p^i-1}\right)=
		\sum_{j=0}^{n-1} \sum_{i=0}^{n-1}a_i^{p^j}x^{p^{i+j}-p^j},\]
		and because of our assumption, this polynomial vanishes at every element of $\F_q \setminus \{0\}$.
		
		\medskip
		
		\noindent
		{\bf The $p>2$ case.}
		
		If we multiply this polynomial by $x^{1+p+p^2+\ldots+p^{n-1}}$, then
		we obtain
		\[\sum_{j=0}^{n-1} \sum_{i=0}^{n-1}a_i^{p^j}x^{1+p+\ldots+p^{n-1}+p^{i+j}-p^j}\]
		and this polynomial vanishes at every element of $\F_q$.
		As a function, this polynomial remains the same if we consider it modulo $x^{p^n}-x$, so it is the same function as the polynomial  we obtain when we replace
		the exponents $p^{i+j}$ with  $p^{i+j-n}$ for each $i+j\geq n$. Denote this new polynomial with $\tilde{f}$.
		The fact that we multiplied $f$ by $x^{1+p+p^2+\ldots+p^{n-1}}$ ensures that the exponents of $\tilde{f}$ are larger than
		$0$ and smaller than $q$.
		We claim that each monomial has  different degree in $\tilde{f}$.
		It is clear that $\tilde{f}$ is the sum of at most $n^2$ monomials and the set of degrees of these monomials is contained in the set
		\[A:=\{1+p+p^2+\ldots+p^{n-1}+p^{c}-p^d : c,d \in \{0,1,\ldots,n-1\} \}.\]

		Assume that for some
		$c_1,c_2\in \{0,1,\ldots,n-1\}$,
		$d_1,d_2 \in \{0,1,\ldots,n-1\}$ and $(c_1,d_1)\neq (c_2,d_2)$
		
		\[1+p+p^2+\ldots+p^{n-1}+p^{c_1}-p^{d_1}=1+p+p^2+\ldots+p^{n-1}+p^{c_2}-p^{d_2},\]
		or equivalently $p^{c_1}+p^{d_2}=p^{c_2}+p^{d_1}$. Since the base $p$-digits of an integer are uniquely determined,
		this implies $\{c_1,d_2\}=\{c_2,d_1\}$, so either
		$c_1=c_2$ and $d_1=d_2$, or $c_1=d_1$ and $c_2=d_2$. We conclude that two distinct monomials of $\tilde{f}$ have the same degree $d$ if any only if
		$d=1+p+\ldots+p^{n-1}$, that is, when in
		\[1+p+\ldots+p^{n-1}+p^{i+j}-p^j\]
		we have $i=0$.
		
		Note that the degree of $\tilde{f}$ is at most
		\[m:=1+p+\ldots+p^{n-1}+p^{n-1}-1.\]
		Since $p>2$, $m$ is clearly smaller than $q$, but $\tilde{f}$ has $q$ roots (the elements of $\F_q$). Thus it is the zero polynomial, all of its coefficients are zero. The coefficients of $\tilde{f}$ are
		the $p^j$-powers of $a_1,\ldots,a_{n-1}$ and $\Tr_{q/p}(a_0)$. So $f(x)=a_0x$.
		
		\medskip
		
		\noindent
		{\bf The $p=2$ case.}
		
		Note that when $i\neq 0$, then
		\[2^{i+j}-2^j=2^j+2^{j+1}+\ldots+2^{i+j-1}.\]
		If $2^{i+j}-2^j \geq 2^n=q$ then write this number as
		\[2^{i+j}-2^j=2^j+2^{j+1}+\ldots+2^{i+j-1}=
		2^j+2^{j+1}+\ldots+2^{n-1}+2^n(1+\ldots+2^{i+j-1-n}).\]
		Clearly, as $\F_q \rightarrow \F_q$ functions,
		\[x \mapsto x^{2^{i+j}-2^j}\]
		are the same as
		\[x \mapsto x^{2^j+2^{j+1}+\ldots+2^{n-1}}x^{1+\ldots+2^{i+j-1-n}}=\]
		\[x^{1+\ldots+2^{i+j-1-n}+2^j+2^{j+1}+\ldots+2^{n-1}}.\]
		When $2^{i+j}-2^j\geq 2^n=q$, then substitute these exponents in
		\[\Tr_{q/2}\left(\frac{f(x)}{x}\right)=
		\sum_{j=0}^{n-1} \sum_{i=0}^{n-1}a_i^{2^j}x^{2^{i+j}-2^j}\]
		with the exponents
		\[B_{ij}:=1+\ldots+2^{i+j-1-n}+2^j+2^{j+1}+\ldots+2^{n-1}<q,\]
		and denote this new polynomial with $\tilde{f}$. Note that in this case $i+j-1-n<j-1$.
		When $2^{i+j}-2^j<q$, $i\neq 0$, then define
		\[A_{ij}:=2^{i+j}-2^j=2^j+2^{j+1}+\ldots+2^{i+j-1}<q.\]
		If $i=0$ then put $A_{0j}=0$.
		Since base $2$-digits of an integer are uniquely determined, we have the following:
		$(1)$ If $i_1 \neq 0$, then
		$A_{i_1j_1}=A_{i_2j_2}$ iff $(i_1,j_1)=(i_2,j_2)$,
		$(2)$ $A_{0j_1}=A_{0j_2}$ for any pair $(j_1,j_2)$, $(3)$
		$B_{i_1j_1}=B_{i_2j_2}$ iff $(i_1,j_1)=(i_2,j_2)$,
		finally $(4)$ $B_{i_1j_1}=A_{i_2j_2}$ iff $i_1+j_1-n=j_1$ and $j_2=0$ and $i_2=n$ (otherwise $B_{i_1j_1}$ in base $2$ has the form $11\ldots110\ldots011\ldots11$, while $A_{i_2j_2}$ has the form
		$11\ldots1100\ldots00$, a contradiction); but $i_1, i_2<n$.
		It follows that the only exponent which appears in more than one monomial is the $0$.
		The degree of $\tilde{f}$ is at most $q-2$ (obtained in $x^{A_{(n-1)\,1}}$) and it has
		$q-1$ roots (the elements of $\F_q \setminus \{0\}$), so it is the zero polynomial.
		Hence all of its coefficients are zero. These coefficients are the $2^j$-powers of $a_1,\ldots, a_{n-1}$ and $\Tr_{q/2}(a_0)$. It follows that $f(x)=a_0x$.
	\end{proof}

	We will need the following two results regarding functions over finite fields.
	
	\begin{result}[{\cite[Theorem 5.41]{FFBook}}, Weil's bound]
		\label{res1}
		Let $\psi$ be a multiplicative character of $\F_q$ of order $m>1$ and let $f\in \F_{q}[x]$ be a monic polynomial of positive degree that is not an $m$-th power of a polynomial. Let $d$ be the number of distinct roots of $f$ in $\overline{\F}_q$. Then for every $a\in \F_q$ we have
		\[\left| \sum_{c\in \F_q} \psi(a f(c)) \right| \leq (d-1)\sqrt{q}.\]	
	\end{result}
	
	We will also consider polynomials of degree $\sqrt{q}+1$ admitting square values for almost every element of $\F_q$. In this case, the inequality above seems to be useless. In Lemma \ref{shortcut}, we show a way how to derive information from Weil's bound also in this case. When $m=d=2$ then the following, stronger result holds which can be easily proved by counting $\F_q$-rational points of a conic of $\PG(2,q)$:
	
	\begin{result}[{\cite[Exercise 5.32]{Sziklaipoly}}]
		\label{res1b}
		Let $q$ be an odd prime power, $f(x)=ax^2+bx+c \in \F_q[x]$ with $a\neq 0$, and let $\psi$ denote the quadratic character $\F_q \rightarrow \{-1,1,0\}$. Then
		\[\sum_{x\in \F_q} \psi(ax^2+bx+c)\]
		equals $-\psi(a)$ if $b^2-4ac\neq 0$ and $(q-1)\psi(a)$ if $b^2-4ac=0$.
	\end{result}

	
	\medskip
	
	To use Result \ref{res1}, we will need the following.
	
	
	\begin{lemma}
		\label{smalldegree}
		Put $f(x)=a x^{p^k+1}+dx^{p^{k}}+bx+c \in \F_q[x]$, $k\neq 0$.
		If $q$ is odd and $f(x)=g(x)^2$, then $d^{p^k}a=ba^{p^k}$ and $d^{p^k+1}a=ca^{p^k+1}$, or $a=b=d=0$.
	\end{lemma}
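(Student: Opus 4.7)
The plan is to differentiate $f$ and exploit the inseparability that occurs in characteristic $p$, which forces $g$ to be a power of a single linear factor.

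\textbf{Degenerate case $a=0$.} Here $f(x)=dx^{p^k}+bx+c$. If $d\neq 0$ then $\deg f=p^k$, which is odd (since $p$ is odd and $k\geq 1$), contradicting that $f$ is a square. So $d=0$, and by the same argument applied to $\deg(bx+c)=1$ we get $b=0$. Thus the second clause of the statement holds.

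\textbf{Main case $a\neq 0$.} I would start by computing the formal derivative in $\F_q[x]$. Because $p^k\equiv 0\pmod p$ and $p^k+1\equiv 1\pmod p$,
\[f'(x)=a(p^k+1)x^{p^k}+dp^kx^{p^k-1}+b=ax^{p^k}+b.\]
Since $f=g^2$ and $p$ is odd, $f'=2gg'$, and in particular $g\mid f'$ in $\overline{\F}_q[x]$. The crucial observation is that the map $x\mapsto x^{p^k}$ is a bijection on $\F_q$, so there exists a unique $\gamma\in\F_q$ with $\gamma^{p^k}=-b/a$, and one gets the inseparable factorization
\[f'(x)=ax^{p^k}+b=a(x-\gamma)^{p^k}.\]
Therefore $g$ is, up to a scalar in $\overline{\F}_q$, a power of the single linear factor $(x-\gamma)$, and matching degrees ($\deg g=(p^k+1)/2$, which is an integer since $p^k$ is odd) forces
\[g(x)=a'(x-\gamma)^{(p^k+1)/2}\quad\text{for some }a'\in\overline{\F}_q.\]

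\textbf{Extracting the identities.} Squaring gives $f(x)=(a')^2(x-\gamma)^{p^k+1}$, which I would expand as
\[(x-\gamma)^{p^k+1}=(x-\gamma)^{p^k}(x-\gamma)=(x^{p^k}-\gamma^{p^k})(x-\gamma)=x^{p^k+1}-\gamma x^{p^k}-\gamma^{p^k}x+\gamma^{p^k+1}.\]
Comparing coefficients with $f(x)=ax^{p^k+1}+dx^{p^k}+bx+c$ yields $a=(a')^2$, $d=-a\gamma$, $b=-a\gamma^{p^k}$, $c=a\gamma^{p^k+1}$. Eliminating $\gamma=-d/a$ and using that $p^k$ is odd (so $(-1)^{p^k}=-1$ and $(-1)^{p^k+1}=1$) gives
\[b=-a\Bigl(\!-\tfrac{d}{a}\Bigr)^{p^k}=\frac{d^{p^k}}{a^{p^k-1}},\qquad c=a\Bigl(\!-\tfrac{d}{a}\Bigr)^{p^k+1}=\frac{d^{p^k+1}}{a^{p^k}},\]
which rearrange to $d^{p^k}a=ba^{p^k}$ and $d^{p^k+1}a=ca^{p^k+1}$, as required.

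The only step that requires any thought is recognizing that $f'(x)=ax^{p^k}+b$ has a single $p^k$-fold root; after that the proof is a mechanical coefficient comparison. A minor issue is verifying that $\gamma$ lives in $\F_q$ (so that the resulting identities are $\F_q$-identities), but this is immediate from the bijectivity of the Frobenius on the finite field $\F_q$.
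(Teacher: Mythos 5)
Your proof is correct and follows essentially the same route as the paper's: in the nondegenerate case both arguments reduce to the observation that $f'(x)=ax^{p^k}+b$ is the $p^k$-th power of a single linear factor, forcing $f(x)=a(x-\gamma)^{p^k+1}$ with $\gamma\in\F_q$, after which the identities fall out by comparing coefficients. The only (immaterial) difference is that you pass through the factor $g\mid f'$ and a degree count, while the paper argues directly that every root of $f$ is a multiple root and hence a root of $f'$.
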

	\begin{proof}
		If $a=0$, then $b=d=0$  otherwise the degree of $f$ was odd. Assume $a\neq 0$ and suppose $f(x)=g(x)^2$. Then the roots of $f$ have multiplicities at least $2$ and hence they are also roots of $f'(x)=ax^{p^k}+b=(a^{p^{-k}}x+b^{p^{-k}})^{p^k}$. It follows that $f(x)$ has a unique root, $-(b/a)^{p^{-k}}$, so
		\[f(x)=a(x+\gamma)^{p^k+1}=a(x^{p^k}+\gamma^{p^k})(x+\gamma)=ax^{p^k+1}+\gamma ax^{p^k}+a\gamma^{p^k}x+\gamma^{p^k+1}a,\]
		with $\gamma=(b/a)^{p^{-k}}$.
		It follows that $f$ has the listed properties.
	\end{proof}

	\begin{lemma}
		\label{shortcut}
		If for some odd, square $q>9$ there is a subset $D$ of $\F_q$ of size larger than $q-\sqrt{q}/2+1/2$ such that the $\F_{q}\rightarrow \F_{q}$ function $x \mapsto \ell(x):=ax^{\sqrt{q}+1}+dx^{\sqrt{q}}+bx+c$, $a\neq 0$, has the property that $\ell(x)$ is a square of $\F_{q}$ for each $x\in D$, then $a^{\sqrt{q}}b=d^{\sqrt{q}} a$.
	\end{lemma}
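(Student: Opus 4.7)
The plan is to bound the quadratic character sum
\[
S := \sum_{x \in \F_q} \psi(\ell(x)),
\]
where $\psi$ is the quadratic character of $\F_q$, from below using the hypothesis and from above using Weil's bound (Result \ref{res1}). Since $\psi(\ell(x)) \in \{0,1\}$ for every $x \in D$ and $\psi(\ell(x)) \geq -1$ for every $x$, the hypothesis $|D| > q - \sqrt{q}/2 + 1/2$ combined with the trivial bound of $\sqrt{q}+1$ on the number of roots of $\ell$ yields $S > q - 2\sqrt{q}$, which is positive because $q > 9$.

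I would then write the monic polynomial $\hat\ell := \ell/a$ as $\hat\ell = h \cdot r^2$ with $h$ monic and squarefree. Since $\psi(\ell(x)) = \psi(a)\psi(h(x))$ whenever $r(x) \neq 0$, Weil's bound gives
\[
|S| \leq (\deg h - 1)\sqrt{q} + \deg r
\]
whenever $h$ is non-constant. Combining this with $\deg h + 2\deg r = \sqrt{q}+1$ and the lower bound on $S$ forces $\deg h \geq \sqrt{q}-1$; since $\sqrt{q}+1$ is even (as $\sqrt{q}$ is odd), the surviving possibilities are $\deg h \in \{0,\,\sqrt{q}-1,\,\sqrt{q}+1\}$. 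If $\deg h = 0$, then $\hat\ell$ is a perfect square of a polynomial, and Lemma \ref{smalldegree} applied to $\hat\ell$ directly gives the target identity $a^{\sqrt{q}}b = d^{\sqrt{q}}a$. If $\deg h = \sqrt{q}-1$, then $\ell$ has a double root; reading the identity
\[
a\ell(x) = (ac-bd) + (ax+d)\ell'(x),
\]
obtained from $\ell'(x) = ax^{\sqrt{q}} + b$ in characteristic $p$, at a common root of $\ell$ and $\ell'$ gives $ac = bd$, and the resulting factorization $\ell(x) = a(x+d/a)(x-\beta)^{\sqrt{q}}$ with $\beta = -(b/a)^{1/\sqrt{q}}$ has squarefree part of degree at most $2$, contradicting $\deg h = \sqrt{q}-1 \geq 4$ (which holds since $q > 9$ is an odd square, so $\sqrt{q} \geq 5$).

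The main obstacle is the case $\deg h = \sqrt{q}+1$, in which $\ell$ is itself squarefree and Weil delivers only $|S| \leq q$, too weak against the lower bound. To handle this I would apply the shift $y := x + d/a$ to eliminate the $x^{\sqrt{q}}$-term, bringing $\ell$ into the reduced form
\[
L(y) = a y^{\sqrt{q}+1} + B' y + C', \qquad B' = \frac{a^{\sqrt{q}}b - a d^{\sqrt{q}}}{a^{\sqrt{q}}},
\]
so that the desired conclusion becomes $B' = 0$. If $C' = 0$ but $B' \neq 0$ one finds $L(y) = ay(y-\gamma)^{\sqrt{q}}$ whose squarefree part has degree $2$, and the Weil bound $(3\sqrt{q}-1)/2$ is smaller than $q - 2\sqrt{q}$ for $q > 9$, yielding a contradiction. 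The remaining sub-case $B',\,C' \neq 0$ is the crux: here I would decompose $\sum_{y \in \F_q} \psi(L(y))$ along the norm fibres $\{y \in \F_q : y^{\sqrt{q}+1} = z\}$ for $z \in \F_{\sqrt{q}}$, exploit that $\F_{\sqrt{q}}^* \subseteq (\F_q^*)^2$ so that the ``subfield contribution'' $aN(y)+C'$ is well controlled, and extract $B' = 0$ from a refined character-sum estimate on the fibres. This last step — the ``shortcut'' that extracts information from Weil's bound in the borderline-squarefree regime — is the technical heart of the lemma.
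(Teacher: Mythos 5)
Your reduction is sound up to the final case: the lower bound $\sum_{x\in\F_q}\psi(\ell(x))>q-2\sqrt{q}$, the trichotomy $\deg h\in\{0,\sqrt{q}-1,\sqrt{q}+1\}$ for the squarefree part, the use of Lemma \ref{smalldegree} when $\hat\ell$ is a perfect square, and the elimination of $\deg h=\sqrt{q}-1$ via the identity $a\ell(x)=(ac-bd)+(ax+d)\ell'(x)$ are all correct. But the sub-case you yourself identify as the crux --- $L(y)=ay^{\sqrt{q}+1}+B'y+C'$ squarefree with $B',C'\neq 0$ --- is exactly the situation this lemma exists to handle (the paper says just before the statement that Weil's inequality ``seems to be useless'' for such polynomials), and you do not prove it: ``decompose along the norm fibres \dots\ and extract $B'=0$ from a refined character-sum estimate'' is a plan, not an argument. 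So the proposal has a genuine gap at the only genuinely hard step.

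For the record, the paper's mechanism at that step is close to the direction you gesture at, and here is the missing content. Substitute $y\mapsto ty^{\sqrt{q}-1}$ for a fixed $t\neq 0$: for $y\neq 0$ this parametrizes the norm fibre $\N^{-1}(\N(t))$, on which the term $ay^{\sqrt{q}+1}$ collapses to the constant $a\N(t)$, so one is left with the polynomial $G_t(y)=a\N(t)+B'ty^{\sqrt{q}-1}+C'$ of degree only $\sqrt{q}-1$. If the whole fibre lies in $D$, the sum $\sum_{y}\psi(G_t(y))$ is at least $q-\sqrt{q}-1$, which beats the Weil bound $(\sqrt{q}-2)\sqrt{q}$ of Result \ref{res1}, so $G_t$ must be the square of a polynomial; since $G_t'(y)=-B'ty^{\sqrt{q}-2}$, its only possible root is $0$, forcing the constant term $a\N(t)+C'$ to vanish. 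Running this for two values $t_1,t_2$ with $\N(t_1)\neq\N(t_2)$ yields $a\N(t_1)=-C'=a\N(t_2)$, a contradiction. Two points here are essential and absent from your sketch: (i) the degenerate ``$G_t$ is a perfect square'' outcome of Weil's bound must be converted into the linear condition $a\N(t)+C'=0$ and then killed by using \emph{two} fibres with distinct norms; and (ii) a single fibre has only $\sqrt{q}+1$ points, so the up to $\sqrt{q}/2-1/2$ elements of $\F_q\setminus D$ could in principle wreck any one chosen fibre --- one must pick $t_1,t_2$ so that $\N(t_i)$ avoids the norms of all excluded elements, which is possible because there are $\sqrt{q}-1$ nonzero norms available. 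Without (i) and (ii) the argument does not close.
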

	\begin{proof}
		Suppose $a^{\sqrt{q}}b \neq d^{\sqrt{q}}a$.
		Then the value set of $\ell$ clearly does not change if we replace $x$ with $g(y)=((b/a)^{\sqrt{q}}-(d/a))y-d/a$, since $g$ is a permutation polynomial.
		Also, $C:=g^{-1}(D)$, will have the properties that $|C|> q-\sqrt{q}/2+1/2$ and for each $y\in C$,
		\[f(y):=\ell(g(y))\]
		is a square of $\F_q$.
		One can easily verify $f(y)=\ell(g(y))=\beta y^{\sqrt{q}+1}+\beta y+\alpha$, where
		\[\beta=\frac{(a^{\sqrt{q}}b-ad^{\sqrt{q}})^{\sqrt{q}+1}}{a^{2\sqrt{q}+1}}\]
		and $\alpha=c-bd/a$. We will show that this is not possible.
		
		Since the norm $x \mapsto \N(x)$ takes $(\sqrt{q}-1)$ distinct non-zero values in $\F_q$, and $(\sqrt{q}-1) - (\sqrt{q}/2 - 1/2) \geq 2$ (here we use $q>9$ square), we may take $t_1,t_2\in \F_{q}\setminus \{0\}$  such that $\N(t_1)\neq \N(t_2)$ and $\N(t_1), \N(t_2) \notin \{ \N(d) : d \in \F_q \setminus C\}$.
		We show that if $f(x)$ is a square for each $x\in C$ then also the polyomials
		\[f(t_1 y^{\sqrt{q}-1})= \N(t_1) \beta y^{q-1}+\beta t_1 y^{\sqrt{q}-1}+\alpha\in \F_{q}[y],\]
		\[f(t_2 y^{\sqrt{q}-1})= \N(t_2) \beta y^{q-1}+\beta t_2 y^{\sqrt{q}-1}+\alpha\in \F_{q}[y],\]
		have only square values for each $y\in C$.
		Indeed, this follows from the fact that $\N(t_i y^{\sqrt{q}-1}) \notin \{ \N(d) : d \in \F_q \setminus C\}$ and hence $t_i y^{\sqrt{q}-1} \in C$ for $i=1,2$.
		
		Then the polynomials
		\[G_1(y):=\N(t_1) \beta+\beta t_1 y^{\sqrt{q}-1}+\alpha \in \F_{q}[y],\]
		\[G_2(y):=\N(t_2) \beta+\beta t_2 y^{\sqrt{q}-1}+\alpha \in \F_{q}[y],\]
		take only square values on the non-zero elements of $C$.
		Denote by $\psi$  the multiplicative character of $\F_q$ of order two. The polynomial
		$G_i$ has at most $\sqrt{q}-1$ roots, and $\psi(G_i(x))=1$ for every element $x$ of $C\setminus\{0\}$ if $x$ is not a root of $G_i$.
		Define $\varepsilon$ to be $\psi(G_i(0))$ if $0\in C$ and to be $0$ otherwise. Then
		$|C\setminus \{0\}|-(\sqrt{q}-1)+\varepsilon \leq \sum_{x\in C} \psi(G_i(x))$.
		On the other hand, $-(q-|C|) \leq \sum_{x\in \F_q \setminus C} \psi(G_i(x))$, and hence
		\[2|C|-q-\sqrt{q}-1\leq\left| \sum_{y\in \F_q} \psi(G_i(y)) \right|.\]		
		Since
		\[(\sqrt{q}-2)\sqrt{q}<  2|C|-q-\sqrt{q}-1,\]		
		by Result \ref{res1} (with $m=2$) this can only happen if $G_i=g_i^2$ for some polynomials $g_i$, $i=1,2$. Then the roots of $G_i$ (in the algebraic closure of $\F_q$) are multiple roots of $G_i$ and hence also roots of $\gcd(G_i,G_i')$. The only root of $G_i'$ is $0$, thus $G_i(0)=0$ and hence $\N(t_i)\beta+\alpha=0$. Since $a^{\sqrt{q}}b \neq d^{\sqrt{q}}a$, we have $\beta\neq 0$. It follows that $\N(t_i)=-\alpha/\beta$ for $i=1,2$, a contradiction because of the choice of $t_1$ and $t_2$.
	\end{proof}
	
	The next example shows that $\ell(x)=ax^{\sqrt{q}+1}+dx^{\sqrt{q}}+bx+c$ can have only square values if $a^{\sqrt{q}}b=d^{\sqrt{q}} a$ holds.
	
	\begin{example}
		For $t,r\in \F_q$, the polynomial
		\[f(x)=r^{\sqrt{q}+1}x^{\sqrt{q}+1}+r^{\sqrt{q}}tx^{\sqrt{q}}+rt^{\sqrt{q}}x+t^{\sqrt{q}+1}=(t+rx)^{\sqrt{q}+1}\]
		has only square values in $\F_q$.
	\end{example}

We will need a generalisation of the following result by G\"olo\u glu and McGuire. 

\begin{result}[{\cite[Theorem 1.2]{McGuire}}]
		\label{res2}
		Let $q$ be odd and consider a non zero polynomial $L(x)=\sum_{i=0}^{n-1}a_i x^{p^i} \in \F_q[x]$. Denote by $\square_q$ the set of non-zero squares in $\F_q$. Then
\[\mathrm{Im}\,\left( \frac{L(x)}{x}\right) \subseteq \square_q \cup \{0\}\]
if and only if $L(x)=ax^{p^d}$ for some $a \in \square_q$ and $0\leq d \leq n$.
	\end{result}

\begin{definition}
If $U$ is a point set of $\AG(2,q)$, then the set of directions defined by $U$ is
\[D_U=\left\{\left( \frac{a-b}{c-d} \right) : (a,b),(c,d)\in U, (a,b)\neq (c,d)\right\}.\]
(If the denominator is zero then $\left(\frac{a-b}{0}\right)=(\infty)$, the ideal point of vertical lines.)
\end{definition}

In the proof of Theorem \ref{teonew} the following result of Sz\H onyi will be crucial.

\begin{result}[{\cite[Theorem 4 and Proposition 6]{TS}}]
	\label{dirszonyi}
	Let $U$ be a point set of $\AG(2,q)$ of size at least $q-\sqrt{q}/2$ and let $D_U$ be the set of directions determined by $U$.
	\begin{enumerate}
		\item If $U$ determines less than $(q+1)/2$ directions, then $U$ can be extended to a $q$-set determining the same set of directions as $U$.
		\item If $U$ determines exactly $(q+1)/2$ directions, one of them is $(\infty)$ and there is no point $P\in \AG(2,q) \setminus U$ such that $U\cup \{P\}$ determines the same set of directions as $U$, then the $(q+1)/2$-set
		\[\{d\in \F_q,\, (d)\notin D_U\}\]
		is the set of $Y$ coordinates of the points of an irreducible conic $\cC$ of $\AG(2,q)$ and the direction $(0)$ is an internal point of $\cC$.
	\end{enumerate}
\end{result}

\begin{remark}
	\label{unicity}
	By {\cite[Remark 3.3]{TS2}} a blocking set of size at most $2q$ contains a unique minimal blocking set. Let $U$ denote an affine point set of size at least $q-\sqrt{q}/2$ such that $U$ determines less than $(q+1)/2$ directions. 
	Assume that $\cP$ and $\cP'$ are two affine point sets of size $q-|U|$ which extend $U$ to a $q$-set determining the same set of directions as $U$. Then 
	$\cB:=U \cup \cP \cup \cP' \cup D_U$ is a blocking set of size at most 
	$\lfloor q+\sqrt{q}/2+(q+1)/2 \rfloor \leq 2q$ and hence $\cB$ contains a unique minimal blocking set. But both $U \cup \cP \cup D_U$ and $U \cup \cP' \cup D_U$ are minimal blocking sets and this proves $\cP=\cP'$, that is, the unicity of the extension of $U$ in Result \ref{dirszonyi}.
\end{remark}

\begin{lemma}\label{lemnew}
Let $\cS$ denote the set of non-zero squares or non-squares in $\GF(q)$. If the set of $Y$ coordinates of the points of an irreducible conic $\cC$ of $\AG(2,q)$, $q \geq 53$ odd, is contained in $\cS \cup \{0\}$ then $\cC$ is a parabola with equation
\[Y=\alpha (a'X+b'Y+c')^2,\]
where $\alpha \in \cS$.
\end{lemma}
\begin{proof}
Note that horizontal translations of $\cC$ does not affect the properties that we are examining, so after substituting $X$ by $X-\beta$ for a suitable $\beta \in \F_q$ we may assume that $(0,0)$ is not a point of $\cC$ and hence the equation of the conic is
\[aX^2+bXY+cY^2+dX+eY+1=0.\]
The direction $(0)$ cannot be a point of the projective extension of $\cC$ since otherwise we would get at least $q-1>(q+1)/2$  different $Y$ coordinates. It follows that there are at most $2$ horizontal lines meeting $\cC$ in $1$ point and at least $(q-3)/2$ horizontal lines meeting $\cC$ in $2$ points. Fix some $\alpha \in \cS$. At least $(q-5)/2$ horizontal lines meet $\cC$ in $2$ points $(A_i,\alpha B_i^2)$ and $(A'_i,\alpha B_i^2)$ with $B_i\neq 0$; and $\cC$ has at most $2$ points on the $X$ axis.
Next define the quartic $\cQ$ (which might as well be of smaller degree if $c=0$):
\[aX^2+\alpha bXY^2+\alpha^2 cY^4+dX+\alpha eY^2+1=0.\]
Points of $\cC$ on the $X$ axis are points of $\cQ$ as well, and if $(A_i,\alpha B_i^2)$, $(A'_i,\alpha B_i^2)$, $B_i\neq 0$, were two points of $\cC$ then
$(A_i,\pm B_i)$, $(A'_i,\pm B_i)$ are $4$ points of $\cQ$. It follows that 
\[\cA=\{(x,y), (x,-y) : (x,\alpha y^2) \in \cC\}\]
is a subset of the point set of $\cQ$ of size at least $2+2(q-5)=2q-8$.
Note that $2q-8 > q+1+6\sqrt{q}$ (here we use $q\geq 53$). It follows instantly from the Hasse-Weil bound that $\cQ$ cannot be an irreducible cubic or an irreducible quartic.

First we show that $\cQ$ cannot be a quartic curve which is the product of an irreducible cubic and a line. 
Vertical and horizontal lines meet $\cA$ in at most $2$ points
and hence if such a line would be a factor of $\cQ$ then the remaining at least $2q-10$ points of $\cA$ should lie on the cubic, a contradiction again by the Hasse-Weil bound now applied to cubic curves. 
Similarly, if $Y=mX+n$ was a factor of $\cQ$, for some $m\neq 0$, then
\[aX^2+\alpha bX(mX+n)^2+\alpha^2 c(mX+n)^4+dX+\alpha e(mX+n)^2+1\]
was the zero polynomial. The coefficient of $X^4$ is $\alpha^2 c m^4$, so $c$ has to be zero but then $\cQ$ is not a quartic curve, a contradiction. 


From now on we may assume that
\[aX^2+\alpha bXY^2+\alpha^2 cY^4+dX+\alpha eY^2+1=F\cdot G,\]
where $F$ and $G$ are of degree at most $2$. Put
\[F=(a_1Y^2+b_1X+c_1Y+1+d_1X^2+e_1XY),\] \[G=(a_2Y^2+b_2X+c_2Y+1+d_2X^2+e_2XY).\]
In $F\cdot G$ the coefficient of $Y$ is $c_1+c_2$, while it is $0$ in the equation of $\cQ$, so clearly $c_2=-c_1$ and we will use this from now on. The coefficient of $X^4$ is $d_1d_2$ and it has to be zero, so from now on we may assume $d_1=0$. Then the coefficient of $X^3$ is $d_2b_1$ and it has to be zero.

In this paragraph assume \fbox{$d_2\neq 0$}, then $b_1=0$ and the coefficient of $X^3Y$ is $d_2e_1$ so $e_1=0$. The coefficient of $YX^2$ is $c_1d_2$, so $c_1=0$. Then the coefficient of $XY$ is $e_2$, so $e_2=0$. The coefficient of $X^2Y^2$ is $d_2a_1$, so $a_1=0$. We arrived to the conclusion that the equation of $\cQ$ is  $a_2Y^2+b_2X+1+d_2X^2$. It follows that the equation of $\cC$ is $Y=-\alpha (b_2X+1+d_2X^2)/a_2$. Then by Result \ref{res1b}, $-(b_2X+1+d_2X^2)/a_2=(a'X+b')^2$ for some $a',b'\in \F_q$ and this finishes the proof of the $d_2\neq 0$ case.

Now assume \fbox{$d_2=0$} (recall also $d_1=0$). Then the coefficient of $X^2Y^2$ is $e_1e_2$. We may assume $e_1=0$. The coefficient of $X^2Y$ is $e_2b_1$. First assume \underline{$e_2\neq 0$}, so $b_1=0$.
Then the coefficient of $Y^3X$ is $a_1e_2$, so $a_1=0$. Then the coefficient of $Y^3$ is $c_1a_2$. We cannot have $c_1=0$ since then the coefficient of $XY$ would be $e_2\neq 0$, so $a_2=0$. The coefficient of $XY$ is $c_1b_2+e_2=0$ and hence the equation of $\cQ$ is: $(1+c_1Y)(1-c_1Y)(1+b_2X)$, a contradiction since vertical and horizontal lines contain at most $2$ points of $\cA$.
So \underline{$e_2=0$} and from now on we may assume that $\cQ$ has equation
\[(a_1Y^2+b_1X+c_1Y+1)(a_2Y^2+b_2X-c_1Y+1)=0.\]
The fact that $Y^3$ and $XY$ should have zero coefficient yields $a_1=a_2$ and $b_1=b_2$, or $c_1=0$.
In the former case the equation of $\cQ$ is
\[(a_1Y^2+b_1X+c_1Y+1)(a_1Y^2+b_1X-c_1Y+1)=0,\]
so $\cC$ had equation
\[(\alpha^{-1} a_1Y+b_1X+1)^2-\alpha^{-1}c_1^2Y=0,\]
which proves the assertion.

In the latter case the equation of $\cQ$ is
\[(a_1Y^2+b_1X+1)(a_2Y^2+b_2X+1)=0,\]
so $\cC$ had equation
\[(a_1Y/\alpha+b_1 X+1) (a_2 Y/\alpha+b_2 X+1)=0,\]
a contradiction since $\cC$ was irreducible.
\end{proof}

The following can be considered as a generalisation of Result \ref{res2}.

\begin{theorem}\label{teonew}
	Let $U$ denote a point set of $\AG(2,q)$, $q\geq 53$ odd, of size at least $q-\sqrt{q}/2$. Let $\cS$ denote the set of non-zero squares or non-squares in $\F_q$ and let $(d)$ denote one of the directions $(0)$ or $(\infty)$. If $D_U$ is contained in $\{(s) : s \in \cS\} \cup \{(d)\}$, then $U$ is affinely equivalent to a subset of the graph of a function of the form
	\[f(x)=\alpha x^{p^k},\]
	where $\alpha \in \cS$.
\end{theorem}
\begin{proof}
If $|D_U|<(q+1)/2$ then by Result \ref{dirszonyi} $U$ can be extended to a $q$-set $U'$ determining the same set of directions. According to Result \ref{BBBSSz} $U'$ is affinely equivalent to the graph of a linearised polynomial $f$. Then Result  \ref{res2} shows that $f$ has the requested form.

Now assume $|D_U|=(q+1)/2$. If $(d)=(0)$, then apply the affinity $\varphi \colon (x:y:z)\mapsto (y:x:z)$.
Clearly, $D_{U^{\varphi}}=(D_U)^{\varphi}$ and $U$ can be extended if and only if $U^{\varphi}$ can be extended.
We have $(0)^\varphi=(\infty)$ and
if $m\neq 0$ then $(m)^\varphi=(1/m)$, so
\[\{(s)^\varphi : s \in S\}=\{(s) : s \in S\}.\]
By Result \ref{dirszonyi}, if $U^\varphi$ (or $U$, if $(d)=(\infty)$) cannot be extended, then the set of non-zero squares or non-squares together with the zero  equals the set of $Y$ coordinates of an irreducible affine conic $\cC$ and $(0)$ is an internal point of $\cC$. Then the line at infinity is not a tangent to $\cC$, thus $\cC$ is not a parabola (and not a hyperbola because then the size of the set of $Y$ coordinates would be $(q-1)/2$; but we don't need this) but this is not possible because of the Lemma \ref{lemnew}. It follows that $U$ can be extended to a $q$-set determining the same set of directions as $U$ and the proof can be finished as in the previous paragraph.
\end{proof}

	\section{On intersecting families of graphs of functions}
	Our first aim is to  prove Theorem \ref{degtwo} which we will do separately in the odd and even case.
		
	\begin{lemma}
		\label{kdeg}
		If $U$ is a set of $t$-intersecting polynomials of degree at most $k$ over $\F_q$, then the $(k-t+1)$-ple of coefficients of the monomials $x^t,\ldots,x^{k}$ in elements of $U$ are distinct elements of $\F_q^{k-t+1}$.
	\end{lemma}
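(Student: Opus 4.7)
The plan is a short pigeonhole / polynomial degree argument, directly in the spirit of Part 1 of Lemma \ref{Sam0}.

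First, I would argue by contradiction: suppose two distinct polynomials $f,g \in U$ have the same coefficients in $x^t, x^{t+1}, \dots, x^k$. Writing this out, I get that $f - g$ is a polynomial whose monomials all have degree at most $t-1$, i.e., $\deg(f-g) \leq t-1$. Since $f \neq g$, this difference is a nonzero polynomial of degree at most $t-1$, hence has at most $t-1$ roots in $\F_q$.

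Next I would use the $t$-intersecting hypothesis: the graphs of $f$ and $g$ share at least $t$ common points, which means there exist distinct $x_1, \dots, x_t \in \F_q$ with $f(x_i) = g(x_i)$ for $i = 1, \dots, t$. Each such $x_i$ is a root of $f - g$, contradicting the bound on the number of roots obtained in the previous step. Therefore the map sending a polynomial in $U$ to its tuple of top coefficients $(a_t, a_{t+1}, \dots, a_k) \in \F_q^{k-t+1}$ is injective, proving the statement.

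There is essentially no obstacle here: the only subtlety is to note that the bound on the intersection size translates literally into a bound on the number of roots of $f - g$, which is immediate since $(x, f(x)) = (x, g(x))$ if and only if $x$ is a root of $f - g$. Everything else is the standard fact that a nonzero polynomial of degree at most $t-1$ over a field cannot have $t$ distinct roots.
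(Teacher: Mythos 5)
Your proof is correct and follows the same route as the paper: assume two elements of $U$ agree on the coefficients of $x^t,\ldots,x^k$, note that their difference is a nonzero polynomial of degree at most $t-1$ with at most $t-1$ roots, and contradict the $t$-intersecting hypothesis. No issues.
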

	\begin{proof}
		If the coefficients of $x^t,\ldots,x^{k}$ coincide in $f_1,f_2 \in U$, then $f_1-f_2$ would have degree at most $t-1$, and hence at most $t-1$ roots, thus
		the graphs of $f_1$ and $f_2$ would share at most $t-1$ points, a contradiction.
	\end{proof}
	
	Next,  we report below  Lemma 6.1 from \cite{Sam} with an alternative proof.
	\begin{lemma}[{\cite[Lemma 6.1]{Sam}}]
		\label{Sam0}
		Assume $t\leq k < q$. Let $U$ be a set of polynomials of degree at most $k$ over $\F_q$.
		\begin{enumerate}
			\item If for any $f,g\in U$ there exist $x_1,\ldots,x_t \in \F_q$ such that
			$f(x_i)=g(x_i)$ for $i=1,\ldots,t$, then $|U|\leq q^{k-t+1}$.
			\item If for any $f,g\in U$ there are no $x_1,\ldots,x_t \in \F_q$ such that $f(x_i)=g(x_i)$ for $i=1,\ldots,t$, then $|U|\leq q^{t}$.
		\end{enumerate}
	\end{lemma}
	\begin{proof}
		Proof of Part 1. It is a direct consequence of Lemma \ref{kdeg}. 
		
		Proof of Part 2. Take any $t$ distinct field elements, say, $x_1,\ldots,x_t$. For any polynomial $f$ over $\F_q$,
		$(f(x_1),\ldots,f(x_t))$ can take at most $q^t$ distinct values of $\F_q^t$ and hence if $|U|>q^t$ then there will be at least $2$ polynomials in $U$ which have the same values on the set $\{x_1,\ldots,x_t\}$.
	\end{proof}

	%
	%
	%
	%
	
	\begin{lemma}
		\label{equalcdeg2}
		Let $U$ be a set of intersecting polynomials of degree at most $2$ over $\F_q$. Assume that there are more than
		$\lfloor(q+1)/2\rfloor$ polynomials $h_i$ in $U$, so that their $x^2$ coefficients are $c$, for some fixed $c \in \F_q$ and suppose also
		that there exist values $\alpha$ and $\beta$ so that $h_i(\alpha) = \beta$. Then for every polynomial $f \in U$, whose coefficient in $x^2$ is not $c$, $f(\alpha) = \beta$.
	\end{lemma}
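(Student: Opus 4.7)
The first step is to parametrize the family: write $h_i(x) = cx^2 + b_i x + a_i$ and use $h_i(\alpha)=\beta$ to eliminate $a_i = \beta - c\alpha^2 - b_i\alpha$, so each $h_i$ is determined by $b_i$ alone. Lemma \ref{kdeg} forces the $b_i$ to be pairwise distinct, hence the hypothesis reads $|T| > \lfloor (q+1)/2\rfloor$ for $T := \{b_i\}\subseteq \F_q$. Now fix $f(x) = c'x^2+b'x+a' \in U$ with $A := c'-c\neq 0$. The intersection condition ``$f$ meets $h_i$'' says the quadratic
\[f(x)-h_i(x) = Ax^2 + B_ix + C_i,\qquad B_i := b'-b_i,\]
has a root in $\F_q$, and a short substitution yields $C_i = (f(\alpha)-\beta) - B_i\alpha - A\alpha^2$, neatly separating the target quantity $f(\alpha)-\beta$ from the $b_i$-dependence.

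For $q$ odd, I would form the discriminant $P(t) := (b'-t)^2 - 4A(a'-a_t)$, a monic quadratic in $t$. A direct computation gives that the discriminant of $P$ itself (as a polynomial in $t$) equals $16A(f(\alpha) - \beta)$. If $f(\alpha) \neq \beta$, then $P$ has distinct roots and is not a square, so Result \ref{res1b} yields $\sum_{t\in\F_q}\psi(P(t)) = -1$, where $\psi$ is the quadratic character. But for each $b_i\in T$ the value $P(b_i)$ is the discriminant of a quadratic with a root in $\F_q$, so $\psi(P(b_i))\in\{0,1\}$; this forces
\[\sum_{t\in\F_q}\psi(P(t)) \ \geq\ (|T|-2) - (q-|T|) \ = \ 2|T|-q-2 \ \geq\ 1\]
once $|T|\geq (q+3)/2$, contradicting $-1$. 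Hence $f(\alpha)=\beta$.

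For $q$ even, $f-h_i$ has a root in $\F_q$ iff $B_i = 0$ or $\Tr_{q/2}(AC_i/B_i^2) = 0$ by \eqref{trace}. Plugging in the expression for $C_i$ and using the identity $\Tr_{q/2}(x^2)=\Tr_{q/2}(x)$, the terms $A\alpha/B_i$ and $A^2\alpha^2/B_i^2$ combine to $2\Tr_{q/2}(A\alpha/B_i) = 0$, collapsing the condition to $\Tr_{q/2}(\gamma/B_i^2) = 0$ with $\gamma := A(f(\alpha)-\beta)$; applying $\Tr_{q/2}(x^2)=\Tr_{q/2}(x)$ once more rewrites this as $\Tr_{q/2}(\delta/B_i) = 0$ with $\delta := \sqrt{\gamma}$. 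If $\delta \neq 0$, this would place the at least $|T|-1$ distinct nonzero values $1/B_i$ into the $(q/2-1)$-element set $\{w\neq 0 : \Tr_{q/2}(\delta w) = 0\}$, contradicting $|T| > q/2$. So $\gamma = 0$, i.e.\ $f(\alpha)=\beta$.

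The conceptual core is the decomposition $C_i = (f(\alpha)-\beta) - B_i\alpha - A\alpha^2$, which isolates the target scalar from the varying parameter $b_i$; from there each characteristic needs one classical input (Result \ref{res1b} for odd $q$, an $\F_2$-hyperplane count for even $q$). The subtlest step I expect is the characteristic-two cancellation, where the identity $\Tr(x^2)=\Tr(x)$ is invoked twice to eliminate all $\alpha$-dependence outside the single term $\delta/B_i$, after which the counting argument becomes immediate.
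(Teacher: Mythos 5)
Your argument is correct and arrives at exactly the right threshold $\lfloor(q+1)/2\rfloor$ in both characteristics, but it takes a genuinely different route from the paper's proof. The paper first translates so that $\alpha=0$, making all $h_i$ share the constant term $\beta$; assuming some $g\in U$ has leading coefficient different from $c$ and constant term different from $\beta$, the differences $g-h_i$ are more than $\lfloor(q+1)/2\rfloor$ pairwise distinct quadratics $dx^2+vx+w$ with the \emph{same} nonzero $d$ and $w$, each having a root in $\F_q$. Writing such a quadratic as $d(x-u)(x-\frac{w}{du})$ and counting unordered pairs $\{u,\frac{w}{du}\}$ shows there are at most $(q+1)/2$ of them for $q$ odd and $q/2$ for $q$ even; this is a purely combinatorial count of factorizations, using neither character sums nor the trace criterion. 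You instead isolate $f(\alpha)-\beta$ inside $C_i$ and detect it with classical tools: for $q$ odd the discriminant of your $P$ equals $16A(f(\alpha)-\beta)$, so Result \ref{res1b} forces $\sum_{t}\psi(P(t))=-1$, while the intersection hypothesis forces this sum to be at least $2|T|-q-2\geq 1$; for $q$ even, condition \eqref{trace} collapses, after two uses of $\Tr_{q/2}(x)=\Tr_{q/2}(x^2)$, to placing the at least $|T|-1$ distinct nonzero values $1/B_i$ inside the $(q/2-1)$-element punctured kernel of $w\mapsto\Tr_{q/2}(\delta w)$. The paper's count is shorter and entirely elementary; your version is closer in spirit to the character-sum machinery the paper deploys elsewhere (Lemmas \ref{shortcut} and \ref{deg2.1}) and has the conceptual advantage of exhibiting $f(\alpha)-\beta$ explicitly as the obstruction, which would adapt more readily to variants of the statement. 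All the individual steps you rely on (distinctness of the $b_i$ via Lemma \ref{kdeg}, the formula for $C_i$, the character-sum bound, and the hyperplane count) check out.
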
	
	\begin{proof}
		First assume that $\alpha = 0$. Then the constant term in the polynomials $h_i$ is always $\beta$ and we want to show that for any
		polynomial $f \in U$, if the coefficient of $x^2$ is not $c$, the constant term must be $\beta$. Assume to the contrary, that there is a polynomial $g \in U$, whose constant term is not $\beta$. Consider the polynomials:
		\[\{g-h_i\}.\]
		Since $g$ and $h_i$ are intersecting, $g-h_i$ must have a root in $\F_q$. Also, by the assumptions of the lemma, $(g-h_i)(x) = dx^2 + vx + w$,
		where $d\not=0$ and $w \not=0$ are fixed.  We claim that there are at most $\lfloor(q+1) / 2\rfloor$ such polynomials, hence a contradiction.  Indeed, if
		$(g-h_i)(x)$ has a root in $\F_q$, then it can be written as $d(x-u)(x-\frac{w}{du})$. So to bound the number of possible polynomials, we have to
		bound the number of different $(u, \frac{w}{du})$ pairs, where the order does not matter.
		
		First assume $q$ to be odd. If $w/d$ is not a square, then we get $(q-1)/2$ such pairs.
		If it is a square, then we see $2 + (q-3)/2$ pairs, which is $(q+1)/2$.

		Now, assume  $q$ to be even. In this case the number of different  pairs $(u, \frac{w}{du})$ is $(q-2)/2+1$, that is, $q/2$.
		
		Finally, if $\alpha \not= 0$, then instead of the polynomials $f$ in $U$, consider the polynomials $\bar{f}(x) := f(x+\alpha)$. This new family is clearly
		an intersecting family, the $\bar{h}_i$ polynomials will still have the same leading coefficients and $\bar{h}_i(0)=\beta$, so we are in the previous case.
	\end{proof}
	
	\begin{lemma}
		\label{lotthroughonepoint}
		Let $q$ be even and $U$ be a set of intersecting polynomials of degree at most $2$ over $\F_q$. Assume that
		$|U| > \frac{q^2 + q}{2}$ and assume also that $H$ is a subset of $U$ with more than $\frac{q^2}{2}$ polynomials $h_i$, so that there exist values $\alpha$ and $\beta$ for which $h_i(\alpha) = \beta$.
		Then for every polynomial $f \in U$, $f(\alpha) = \beta$.
	\end{lemma}
	
	\begin{proof}
		By the pigeon hole principle, there exists a value $c$ such that there are more than $\frac{q}{2}$ polynomials in $H$
		with $x^2$ coefficient $c$. Let $U^c$ denote the polynomials in $U$ with $x^2$ coefficient $c$. Then Lemma \ref{equalcdeg2} implies that for any polynomial $f \in (U \setminus U^c)$, $f(\alpha) = \beta$.
		Note that $|U \setminus U^c| > \frac{q^2 + q}{2}-q$.
		Again by the pigeon hole principle, there exists a value $c' \neq c$ so that there are more than $\frac{q^2 - q}{2(q-1)} = \frac{q}{2}$ polynomials in $(U \setminus U^c)$ with $x^2$ coefficient $c'$. Lemma \ref{equalcdeg2} yields that for any polynomial $g \in U^c$, $g(\alpha) = \beta$.
	\end{proof}
	
	The next result can be proved in exactly the same way as Lemma \ref{lotthroughonepoint}.
	
	\begin{lemma}
		\label{lotthroughonepointdispari}
		Let $q$ be odd and $U$ be a set of intersecting polynomials of degree at most $2$ over $\F_q$. Assume that
		$|U| > \frac{q^2 + 2q-1}{2}$ and suppose also that $H$ is a subset of $U$ with more than $\frac{q^2 + q}{2}$ polynomials $h_i$, so that there exist values $\alpha$ and $\beta$ for which $h_i(\alpha) = \beta$. Then for every polynomial $f \in U$, $f(\alpha) = \beta$. \qed
	\end{lemma}
	
	\begin{lemma}
		\label{equalc}
		Let $U$ be a set of intersecting polynomials of degree at most $k>1$ over $\F_q$. Assume that there are more than
		$(q-1)q^{k-2}$ polynomials $h_i$ in $U$, so that their $x^k$ coefficients are $c$, for some fixed $c \in \F_q$ and suppose also
		that there exist values $\alpha$ and $\beta$ so that $h_i(\alpha) = \beta$. Then for every polynomial $f \in U$, whose coefficient of $x^k$ is not $c$, it holds that $f(\alpha) = \beta$.
	\end{lemma}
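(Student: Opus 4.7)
The plan is to mirror (and generalize) the argument of Lemma \ref{equalcdeg2}. First I would reduce to the case $\alpha=0$ via the translation trick used there: replacing each $f\in U$ with $\bar f(x):=f(x+\alpha)$ preserves the intersecting property and the leading coefficient (the coefficient of $x^k$), and turns the condition $h_i(\alpha)=\beta$ into $\bar h_i(0)=\beta$. So assume $\alpha=0$, and suppose for contradiction that some $g\in U$ has $x^k$-coefficient $d\neq c$ while $g(0)\neq\beta$.

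Now consider the polynomials $g-h_i$ as $h_i$ runs over the given family. Each has $x^k$-coefficient $a:=d-c\neq 0$ and constant term $w:=g(0)-\beta\neq 0$, both fixed, and must have a root in $\F_q$ because $U$ is intersecting. Distinct $h_i$ give distinct polynomials $g-h_i$, so it suffices to show that the number $N$ of polynomials $p(x)\in\F_q[x]$ of degree $k$ with leading coefficient $a$, constant term $w$, and a root in $\F_q$ satisfies $N\leq (q-1)q^{k-2}$; this contradicts the hypothesis that there are more than $(q-1)q^{k-2}$ polynomials $h_i$.

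To bound $N$, note that any root $r\in\F_q$ of such a $p$ is nonzero, since $p(0)=w\neq 0$. Write $p(x)=a(x-r)Q(x)$ with $Q(x)=x^{k-1}+b_{k-2}x^{k-2}+\cdots+b_1x+b_0$ monic. The identity $w=p(0)=-ar\,b_0$ forces $b_0=-w/(ar)$, while $b_1,\ldots,b_{k-2}$ are unconstrained. Letting $r$ range over $\F_q\setminus\{0\}$ and the remaining $k-2$ coefficients over $\F_q$, we obtain at most $(q-1)q^{k-2}$ pairs $(r,Q)$, and every polynomial of the described type arises from at least one such pair. Hence $N\leq (q-1)q^{k-2}$, giving the desired contradiction.

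There is no real obstacle here; the argument is a straightforward generalization of the $k=2$ case, and the counting is even cleaner than in Lemma \ref{equalcdeg2} because we do not need to track squareness in $\F_q$ — a crude factorization bound suffices for $k\geq 2$.
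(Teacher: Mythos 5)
Your proof is correct and follows essentially the same route as the paper: reduce to $\alpha=0$ by the translation $\bar f(x)=f(x+\alpha)$, form the differences $g-h_i$ with fixed nonzero leading coefficient and constant term, and bound the number of such degree-$k$ polynomials with a root in $\F_q$ by $(q-1)q^{k-2}$ via the factorization $a(x-r)Q(x)$ with $r\neq 0$ and $Q(0)$ forced. The paper's counting step is word-for-word the same idea (it writes the factor as $(x-u)(dx^{k-1}+\cdots-w/u)$), so no further comment is needed.
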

	
	\begin{proof}
		First assume that $\alpha = 0$. Then the constant term in the polynomials $h_i$ is always $\beta$ and we want to show that for any
		polynomial $f \in U$, if the coefficient of $x^k$ is not $c$, the constant term must be $\beta$. Assume to the contrary, that there is a polynomial $g \in U$, whose constant term is not $\beta$.
		Consider the polynomials:
		\[\{g-h_i\}.\]
		Since $g$ and $h_i$ are intersecting, $g-h_i$ must have a root in $\F_q$. Also, by the assumptions of the lemma, $(g-h_i)(x) = dx^k + v_1x^{k-1} + v_2x^{k-2} + \ldots + v_{k-1}x + w$,
		where $d\not=0$ and $w \not=0$ are fixed. We claim that there are at most $(q-1)q^{k-2}$ such polynomials, hence a contradiction. Indeed, such polynomials can be written in the form $(x-u)(dx^{k-1} + \ldots - w/u)$. Note that $u \not = 0$, because $w\not = 0$, hence $u$ can take $q-1$ values.
		The second term is a polynomial of degree $k-1$, its coefficient in $x^{k-1}$ and its constant term are fixed, so there are at most $q^{k-2}$ different such polynomials.
		
		As before, if $\alpha \not= 0$, then instead of the polynomials $f$ in $U$, consider the polynomials $\bar{f}(x) := f(x+\alpha)$. This new family is clearly
		an intersecting family, the $\bar{h}_i$ polynomials will still have the same leading coefficients and $\bar{h}_i(0)=\beta$, so we are in the previous case.
	\end{proof}

	\subsection{Intersecting families of polynomials of degree at most $2$, over $\F_q$, $q$ odd}

	According to Lemma \ref{kdeg}, the members of an intersecting family of polynomials of degree at most $2$ are of the form $f(b,c)+bx+cx^2$ for some function $f$. More precisely:
	
	\begin{definition}
		\label{function}
		Suppose that $U$ is a set of intersecting polynomials. Put $D=\{(b,c)\in \F_q\times \F_q : a+bx+cx^2 \in U\}$ and define $f \colon D \rightarrow \F_q$ as
		$f(b,c)=a$, where $a\in \F_q$ is the unique field element such that $a+bx+cx^2 \in U$.
	\end{definition}
	

	\begin{lemma}
		Let $U$ be a set of intersecting polynomials of degree at most $2$ and for $b\in \F_q$, $q\geq 53$ odd, define
		\[C_b:=\{ c \in \F_q : f(b,c)+bx+c x^2 \in U\}\]
		and
		\[dom_o:=\{ b \in \F_q : |C_b| > q-\sqrt{q}/2+1/2\}.\]
		There exist functions $s, t \colon dom_o \rightarrow \F_q$ and $h \colon dom_o \rightarrow \{0,1,\ldots,n-1\}$ (where $q=p^n$)
		such that
		for every $c\in C_b$
		\[f(b,c)=s(b)c^{p^{h(b)}}+t(b),\]
		and $-s(b)$ is square in $\F_q$.
	\end{lemma}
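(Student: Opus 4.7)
The plan is to reduce the statement, for each fixed $b \in doM$, to the hypothesis of Result \ref{res2}, then invoke McConnel and unravel. Fix $b \in doM$ and write $F(c) := f(b,c)$ for $c \in C_b$. The intersecting property of $U$, applied to the two polynomials $F(c_1)+bx+c_1 x^2$ and $F(c_2)+bx+c_2 x^2$, forces the difference $(c_1-c_2)x^2 + (F(c_1)-F(c_2))$ to have a root in $\F_q$; hence $-(F(c_1)-F(c_2))/(c_1-c_2)$ is a square of $\F_q$ (possibly zero) for every distinct pair $c_1,c_2 \in C_b$. I would first show that $F$ is either constant on $C_b$ (trivial case, $s(b)=0$) or injective on $C_b$: if $F(c_1)=F(c_2)=a$ with $c_1\neq c_2$, then for every $c_3 \in C_b$ with $F(c_3)\neq a$ applying the squareness condition to the pairs $(c_3,c_1)$ and $(c_3,c_2)$ gives $\psi((c_3-c_1)(c_3-c_2))=1$; Result \ref{res1b} bounds the set of such $c_3$ in $\F_q$ by $(q-3)/2$, and combining this with $|C_b|>q-\sqrt{q}/2+1/2$ together with Weil's bound (Result \ref{res1}) on $\prod_{c\in A}(y-c)$ (with $A=F^{-1}(a)\cap C_b$) rules out a nontrivial collision.

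Assuming $F$ injective on $C_b$, pick any $c_0\neq c_1$ in $C_b$ and set $\lambda=c_1-c_0$, $\mu=F(c_1)-F(c_0)$. Define $G\colon C_b' \to \F_q$ by $G(y):=\mu^{-1}(F(\lambda y+c_0)-F(c_0))$ on $C_b':=\lambda^{-1}(C_b-c_0)$. Then $G(0)=0$, $G(1)=1$, $|C_b'|=|C_b|>q-\sqrt{q}/2+1/2$, and a short computation (using $\psi(\mu/\lambda)=\psi(-1)$, which follows from the squareness condition applied to the pair $(c_0,c_1)$) yields
\[(G(y_1)-G(y_2))^{(q-1)/2} = (y_1-y_2)^{(q-1)/2}\]
for all $y_1,y_2 \in C_b'$. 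This is exactly the identity in Result \ref{res2} with $\delta=2$, except that $G$ is defined only on $C_b'$ rather than on all of $\F_q$.

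The crucial step is therefore to extend $G$ to a function $\tilde G\colon \F_q \to \F_q$ satisfying the same identity on all pairs, after which Result \ref{res2} gives $\tilde G(y)=y^{p^{h_b}}$ for some $h_b \in \{0,1,\ldots,n-1\}$. For each $y^* \in \F_q \setminus C_b'$ the candidate value $v=\tilde G(y^*)$ is characterized by $\psi(v-G(y))=\psi(y^*-y)$ for every $y \in C_b'$; existence and uniqueness of $v$ should follow from a character-sum estimate exploiting the smallness of $|\F_q \setminus C_b'|<\sqrt{q}/2+1/2$ together with Weil's bound (Result \ref{res1}). Once $\tilde G$ is in place and Result \ref{res2} has been applied, undoing the normalization gives $F(c)=s(b)c^{p^{h_b}}+t(b)$ on $C_b$ with $s(b)=\mu\lambda^{-p^{h_b}}$; combining $\psi(\mu/\lambda)=\psi(-1)$ with $\psi(\lambda^{p^{h_b}-1})=1$ (as $p^{h_b}-1$ is even for odd $p$) yields $\psi(-s(b))=1$, i.e.\ $-s(b)$ is a square in $\F_q$.

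The main obstacle is the extension step. A naïve Weil-based count on the character sum $\sum_v \prod_{y \in C_b'}(v-G(y))$-type expression is too weak because the relevant polynomial has degree $|C_b'|$, which is close to $q$. A degree-reducing substitution in the spirit of the $y \mapsto ty^{\sqrt{q}-1}$ trick used in the proof of Lemma \ref{shortcut}---replacing the high-degree polynomial by one of moderate degree before invoking Weil---is likely what unlocks the extension, although the precise substitution here needs to be tailored to the Frobenius exponent $p^{h_b}$ rather than to the norm-related exponent $\sqrt{q}+1$.
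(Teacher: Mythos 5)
Your overall strategy coincides with the paper's up to the point you yourself flag as the main obstacle: both arguments start from the observation that intersecting the two parabolas with the same $x$-coefficient $b$ forces $-\bigl(f_b(C)-f_b(E)\bigr)/(C-E)$ to be a square for all distinct $C,E\in C_b$, and both aim to land in Result \ref{res2} (McConnel) with $\delta=2$. The gap is the extension of $f_b$ from $C_b$ to all of $\F_q$. The paper does not attempt any character-sum or Weil-type argument here; it invokes Sz\H{o}nyi's stability theorem for the direction problem (\cite[Theorem~4]{TS}): a graph of a function defined on $q-\varepsilon$ points of $\F_q$ with $\varepsilon<\sqrt{q}/2-1/2$ that determines at most $(q+1)/2$ directions extends \emph{uniquely} to the graph of a function on all of $\F_q$ determining the same direction set. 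The threshold $q-\sqrt{q}/2+1/2$ in the definition of $doM$ is calibrated exactly to the hypothesis of that theorem, and the direction set here lies in the set of negatives of squares (of size $(q+1)/2$, with $0$ a square, so no injectivity discussion is needed). Once the extension exists, McConnel applies verbatim and the conclusion follows.

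Your proposed substitute for this step does not work as described. Characterizing the value $\tilde G(y^*)$ by $\psi(v-G(y))=\psi(y^*-y)$ for all $y\in C_b'$ requires controlling a product of roughly $|C_b'|\approx q$ quadratic-character factors, i.e.\ a character sum attached to a polynomial with about $q$ distinct roots, which is far beyond the reach of Result \ref{res1}. The substitution trick of Lemma \ref{shortcut} cannot be transplanted either: it exploits the explicit algebraic form $ax^{\sqrt q+1}+dx^{\sqrt q}+bx+c$ (so that composing with $y\mapsto ty^{\sqrt q-1}$ collapses the degree), whereas here $G$ is an unknown function given only by its values on $C_b'$, with no polynomial expression of controlled degree available before the extension has been carried out. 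So the missing ingredient is precisely Sz\H{o}nyi's extendability theorem (or an equivalent R\'edei-polynomial argument); without it the proof does not close.
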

	\begin{proof}
		If $f(b,c)+bx+cx^2$ and $f(d,e)+dx+ex^2$ are members of $U$, then the difference of the two polynomials must have a root in $\F_q$ and hence
		\[F(b,c,d,e):=(b-d)^2-4(f(b,c)-f(d,e))(c-e)\]
		is a square. For $c \in C_b$ put $f_b(c)$ for $f(b,c)$.

		For each $b\in \F_q$ and $C,E \in C_b$, $C\neq E$, consider $F(b,C,b,E)=-4(f_b(C)-f_b(E))(C-E)$, which has to be a square, or, equivalently, after dividing by $(C-E)^2$,
		\[-\frac{f_b(C)-f_b(E)}{C-E}\]
		is in $\square_q \cup \{0\}$ for each $C,E\in C_b$.
		
		If $b\in dom_o$, then by Theorem \ref{teonew},  $f_b$ can be uniquely extended  to a function $\tilde{f_b} \colon \F_q \rightarrow \F_q$ determining the same set of directions as $f_b$ and for each $c\in \F_q$  $\tilde{f_b}(c)=s(b)c^{p^{h(b)}}+t(b)$ for some $dom_o \rightarrow \F_q$ functions $s$, $t$ such that $-s(b)$ is a square, and
		a function $h \colon dom_o \rightarrow \{0,1,\ldots,n-1\}$.
		Then for $c\in C_b$ we have ${f_b}(c)=s(b)c^{p^{h(b)}}+t(b)$.
	\end{proof}
	
	\begin{lemma}
		\label{deg2.1}
		If $q \geq 53$ is odd, $U$ is a set of intersecting polynomials of degree at most $2$ such that $|dom_o|>1$, then
		for $b,d \in dom_o$ and $c\in C_b$, $e \in C_d$ recall that
		\[F(b,c,d,e)=(b-d)^2-4(f(b,c)-f(d,e))(c-e)=\]
		\[(b-d)^2-4(s(b)c^{p^{h(b)}}+t(b)-s(d)e^{p^{h(d)}}-t(d))(c-e).\]
		For $b \in dom_o$, one of the following holds
		\begin{enumerate}
			\item $s(b) = s(d) = 0$ and $t(d) =t(b)$ for each $d \in dom_o$,
			\item $s(b) = s(d) \neq 0$, $h(d) = h(b) = 0$ and $(t(b) - t(d))^2 =-s(b)(b-d)^2$ for each $d \in dom_o$,
			\item $s(b) = s(d) \neq 0$, $h(d) = h(b) = n/2$ and $t(b) = t(d)$ for each $d \in dom_o$.
		\end{enumerate}
	\end{lemma}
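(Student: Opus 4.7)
The plan is to fix $b, d \in doM$ with $d \neq b$ (possible since $|doM|>1$) and to study
\[\ell_E(C) := F(b,C,d,E) = (b-d)^2 - 4\bigl(sC^{p^h} + T(E)\bigr)(C-E),\]
where I abbreviate $s = s(b)$, $t = t(b)$, $h = h(b)$ and $T(E) := t - s(d)E^{p^{h(d)}} - t(d)$. By the intersecting hypothesis, $\ell_E(C)$ is a square in $\F_q$ for every $(C,E) \in C_b \times C_d$, and $b \in doM$ gives $|C_b| > q - \sqrt{q}/2 + 1/2$. The three cases of the conclusion will emerge from three structurally different behaviours of $\ell_E$ viewed as a polynomial in $C$: linear, quadratic, and of degree $\sqrt q + 1$.

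I would first handle $s = 0$: then $\ell_E$ is linear in $C$ with slope $-4T(E)$, and since $|C_b|$ exceeds $(q+1)/2$, the number of squares in $\F_q$, the slope must vanish for every $E \in C_d$. The resulting identity $s(d)E^{p^{h(d)}} = t - t(d)$ over the large set $C_d$ forces $s(d) = 0$ and $t(d) = t$, i.e., case (1); by symmetry $s(b) \neq 0$ implies $s(d) \neq 0$ for every $d$. Now suppose $s, s(d) \neq 0$. If $h = 0$, $\ell_E$ is quadratic in $C$, and combining Result \ref{res1b} with the lower bound $\sum_C \psi(\ell_E(C)) \geq 2|C_b| - q - 2 > q - \sqrt q - 1$ (using that $\psi(-4s) = 1$, since $-s$ is a square) forces the discriminant of $\ell_E$ to vanish for every $E \in C_d$, which simplifies to $(sE + T(E))^2 = -s(b-d)^2$. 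Since the left-hand side, as a polynomial in $E$ of degree $\max(1, p^{h(d)})$, can take only two values on the large set $C_d$, a degree count forces $h(d) = 0$, $s(d) = s$ and $(t(d)-t)^2 = -s(b-d)^2$, giving case (2). If $h = n/2$, I would apply Lemma \ref{shortcut} to $\ell_E$ of degree $\sqrt q + 1$; using that $\sqrt q$ is odd its coefficient identity simplifies to $T(E) = -sE^{\sqrt q}$ on $C_d$, so the polynomial $Q(E) := sE^{\sqrt q} - s(d)E^{p^{h(d)}} + (t - t(d))$ has more zeros than its degree and must vanish identically, forcing $h(d) = n/2$, $s(d) = s$, $t(d) = t$, which is case (3).

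The main obstacle is to rule out the remaining $h \in \{1, \ldots, n-1\} \setminus \{n/2\}$. Setting $k := \min(h, n-h)$, in the case $h > n/2$ I would first apply the Frobenius substitution $C = v^{p^{n-h}}$ (a bijection of $\F_q$), which replaces $\ell_E$ with
\[\tilde\ell_E(v) = -4s\,v^{p^{n-h}+1} - 4T(E)\,v^{p^{n-h}} + 4sE\,v + K, \qquad K := (b-d)^2 + 4ET(E),\]
taking square values on a set of the same cardinality as $C_b$. In both the $h < n/2$ and $h > n/2$ sub-cases, the resulting polynomial has degree $p^k + 1$ with $p^k \leq \sqrt q/\sqrt p$, so for $q > 9$ Weil's bound (Result \ref{res1}) is strictly sharper than the character-sum lower bound and forces the polynomial to be a perfect square of a polynomial. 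Lemma \ref{smalldegree} then yields two coefficient identities: the first simplifies to $T(E) = -sE^{p^h}$, and substituting this into the second collapses to $(b-d)^2 = 0$, contradicting $b \neq d$. The technical heart of the argument is this Frobenius reduction for $h > n/2$ together with the paired coefficient bookkeeping in Lemma \ref{smalldegree}.
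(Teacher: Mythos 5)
Your proposal is correct and follows essentially the same route as the paper: the same case split on $h(b)\in\{0\}\cup(0,n/2)\cup\{n/2\}\cup(n/2,n)$, Weil's bound forcing the relevant polynomial in $C$ to be a perfect square, Lemma \ref{smalldegree} and Lemma \ref{shortcut} supplying the coefficient identities, the kernel/degree argument for linearized polynomials, and the contradiction $(b-d)^2=0$ eliminating $h\notin\{0,n/2\}$. The only deviations are cosmetic — extracting the $s=0$ case up front, substituting $C=v^{p^{n-h}}$ instead of raising to the $p^{n-h}$ power modulo $c^q-c$, and invoking Result \ref{res1b} rather than Result \ref{res1} in the quadratic case.
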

	\begin{proof}
		
		For $b,d \in dom_o$ and $c\in C_b$, $e \in C_d$ recall that $F(b,c,d,e)$ 
		is a square in $\F_q$.
	
		Define the function $G_{b,d,e} \colon
		\F_q \rightarrow \F_q$, as
		\[c \mapsto (b-d)^2-4(s(b)c^{p^{h(b)}}+t(b)-s(d)e^{p^{h(d)}}-t(d))(c-e)=\]
		\[-4s(b)c^{p^{h(b)}+1}+4es(b)c^{p^{h(b)}}-4(t(b)-s(d)e^{p^{h(d)}}-t(d))c+\]
		\[(b-d)^2+4e(t(b)-s(d)e^{p^{h(d)}}-t(d)).\]
		
		\medskip
		
		\noindent		
		\underline{First assume $0<h(b)<n/2$ for some $b\in dom_o$.}
		\medskip

		Denote by $\psi$ the quadratic character of $\F_q$ and apply Result \ref{res1} to the function $G_{b,d,e}$. Then we have
		\[q-\sqrt{q}-p^{h(b)} < -(q-|C_b|)+(|C_b|-p^{h(b)}-1)\leq \sum_{c\in \F_q}\psi(G_{b,d,e}(c)),\]
 as $G_{b,d,e}$ is a polynomial of degree $p^{h(b)}+1$ in $c$, so the number of its roots  is at most $p^{h(b)}+1$.

		Thus, we cannot have
		\[\left|\sum_{c\in \F_q}\psi(G_{b,d,e}(c))\right|\leq p^{h(b)}\sqrt{q}.\]
		It follows that $G_{b,d,e}$ is the square of a polynomial in $c$.
		And hence by Lemma \ref{smalldegree}, one of the following holds:
		\begin{enumerate}[\rm(i)]
			\item $s(b)=0$ and $t(b)-s(d)e^{p^{h(d)}}-t(d)=0$ for each $d\in dom_o$, $e\in C_d$. If we fix $d$ as well and let $e$ run through $C_d$ then we obtain
			$s(d)=0$ and $t(b)=t(d)$, for each $d\in dom_o$.
			\item $s(b)\neq 0$ and
			\begin{equation}
				\label{eq01}
				( 4 e s(b) )^{p^{h(b)}}(-4s(b))=-4(t(b)-s(d)e^{p^{h(d)}}-t(d))(-4s(b))^{p^{h(b)}},
			\end{equation}
			\begin{equation}
				\label{eq02}
				(4 e s(b))^{p^{h(b)}+1}(-4s(b))=((b-d)^2+4e(t(b)-s(d)e^{p^{h(d)}}-t(d)))(-4s(b))^{p^{h(b)}+1}.
			\end{equation}
			Then $\eqref{eq01}$ yields $s(d)e^{p^{h(d)}}-s(b)e^{p^{h(b)}}=t(b)-t(d)$, for each $d\in dom_o$, $e\in C_d$. Fix $d$ as well and let $e$ run through $C_d$. Put $K$ for the dimension over $\F_p$ of the kernel of the $\F_p$-linear $\F_q \rightarrow \F_q$ function $e \mapsto s(d)e^{p^{h(d)}}-s(b)e^{p^{h(b)}}$. Then
			\[\frac{|C_d|}{p^K} \leq \left|\left\{ s(d)e^{p^{h(d)}}-s(b)e^{p^{h(b)}} : e \in C_d \right\}\right|=|\{ t(b)-t(d) \}|=1,\]
			thus $K=n$ ($q=p^n$) and hence $s(d)=s(b)$, $h(d)=h(b)$
			and $t(d)=t(b)$ for each $d\in dom_o$.
			
			Then \eqref{eq02} reads as $0=(b-d)^2$ for each $d \in dom_o$, a contradiction since $|dom_o|>1$.
			
		\end{enumerate}
		
		We proved that $0<h(b)<n/2$ implies $s(d)=0$ and $t(b)=t(d)$ for each $d\in dom_o$.

		%
		%
		\medskip
		
		\noindent
		\underline{Next assume $n/2<h(b)<n$ for some $b\in dom_o$.}
		\medskip
		
		Apply Result \ref{res1} to the function $c \mapsto (G_{b,d,e}(c))^{p^{n-h(b)}} \pmod{c^q-c}$ and continue as above.
		It turns out that $s(d)=0$ and $t(b)=t(d)$ for each $d\in dom_o$ also in this case.
		
		\medskip
		
		\noindent
		\underline{Now assume $h(b) = n/2$ for some $b \in dom_o$.}
		\medskip
		
		If $s(b)=0$, then
		\[-4(t(b)-s(d)e^{p^{h(d)}}-t(d))c+(b-d)^2+4e(t(b)-s(d)e^{p^{h(d)}}-t(d))\]
		is a square for each $d\in dom_o$ and $c\in C_b$, $e\in C_d$. If we consider $d$  and $e$ fixed as well, then it follows that as a function of $c$ it has to be a constant, so
		$t(b)-s(d)e^{p^{h(d)}}-t(d)=0$ for each $e\in C_d$ and hence $s(d)=0$ and $t(b)=t(d)$ for each $d\in dom_o$.

		If $s(b)\neq 0$, then Lemma \ref{shortcut} applied to $G_{b,d,e}$ gives $\eqref{eq01}$ and hence, as before, $s(d)=s(b)$, $t(d)=t(b)$ and $h(d)=h(b)$ for each $d\in dom_o$.
		
		\medskip
		
		\noindent				
		\underline{Finally, consider the case when $h(b)=0$ for some $b\in dom_o$.}
		\medskip		
		
		Then again from Result \ref{res1}, one obtains $G_{b,d,e}(c)= (b-d)^2-4(s(b)c+t(b)-s(d)e^{p^{h(d)}}-t(d))(c-e)=(\alpha +\beta c)^2$,
		for some $\alpha,\beta \in \F_q$.
		
		If $s(b)=0$, that is, when $G_{b,d,e}$ is a constant, then  $t(b)-s(d)e^{p^{h(d)}}-t(d)=0$ for each $d\in dom_o$, $e\in C_d$, so $s(d)=0$ and $t(b)=t(d)$ for each $d\in dom_o$.
		
		If $s(b)\neq 0$, that is, when $G_{b,d,e}$ is of degree two, then
		the discriminant of $G_{b,d,e}$ has to be zero, i.e.
		\[s(b)(b-d)^2+(s(b)e-s(d)e^{p^{h(d)}}+t(b)-t(d))^2=0.\]
		For $d\in dom_o$ let $\varepsilon_d$ be an element of $\F_q$ for which $\varepsilon_d^2=-s(b)(b-d)^2$.
		Consider $d\in dom_o$ fixed as well, then for $e\in C_d$:
		\[s(b)e-s(d)e^{p^{h(d)}} \in \{\varepsilon_d +t(d)-t(b),-\varepsilon_d +t(d)-t(b)\}.\]
		Put $K$ for the dimension over $\F_p$ of the kernel of the $\F_p$-linear $\F_q \rightarrow \F_q$ function $e \mapsto s(b)e-s(d)e^{p^{h(b)}}$. Then
		\[\frac{|C_d|}{p^K} \leq \left|\left\{ s(b)e-s(d)e^{p^{h(d)}} : e \in C_d \right\}\right|\leq 2,\]
		which is a contradiction, unless $K=n$.
		It follows that $s(b)e-s(d)e^{p^{h(b)}}=0$ for each $e\in \F_q$, so
		$h(d)=0$, $s(d)=s(b)$ and $t(d)-t(b)$ is one of $\varepsilon_d$ and $-\varepsilon_d$.
	\end{proof}

	\begin{lemma}
		\label{deg2}
		If $q \geq 53$ is odd, $U$ is a set of intersecting polynomials of degree at most $2$ such that $|dom_o|>(q+1)/2$, then there exist $\alpha,\beta \in \F_q$ such that $g(\alpha)=\beta$ for all $g\in U$ with $g=f(b,c)+bx+cx^2$ where $b \in dom_o$.
	\end{lemma}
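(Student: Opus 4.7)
The plan is to invoke Lemma~\ref{deg2.1} to split into the three mutually exclusive global cases it produces for $(s,t,h)$ on $doM$, and then to treat each case. Case~(1), where $s\equiv 0$ and $t\equiv t_0$ on $doM$, is immediate: $f(b,c)=t_0$ for every $b\in doM$ and $c\in C_b$, so every polynomial $t_0+bx+cx^2\in U$ with $b\in doM$ passes through $(\alpha,\beta)=(0,t_0)$.

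In case~(2), where $s(b)=s\ne 0$, $h(b)=0$, and $(t(b)-t(d))^2=-s(b-d)^2$ for all $b,d\in doM$, I pick $\alpha\in\F_q$ with $\alpha^2=-s$ (available since $-s$ is a nonzero square). Evaluating $f(b,c)+bx+cx^2=c(s+x^2)+bx+t(b)$ at $x=\alpha$ gives $b\alpha+t(b)$, independent of $c\in C_b$, so I need only show that $b\mapsto t(b)+b\alpha$ is constant on $doM$. Fix $b_0\in doM$; the identity $(t(b)-t(b_0))^2=\alpha^2(b-b_0)^2$ yields $t(b)-t(b_0)=\epsilon_b\alpha(b-b_0)$ for some $\epsilon_b\in\{\pm 1\}$, for each $b\in doM\setminus\{b_0\}$. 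Substituting this into $(t(b)-t(b'))^2=-s(b-b')^2$ for two elements $b,b'\in doM\setminus\{b_0\}$ shows that the only way to have $\epsilon_b\ne\epsilon_{b'}$ is for one of $b,b'$ to equal $b_0$, so all the $\epsilon_b$ agree; by replacing $\alpha$ with $-\alpha$ if necessary, I arrange $\epsilon_b\equiv -1$, giving a well-defined $\beta=t(b)+b\alpha$.

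The hard step is case~(3), where $s(b)=s\ne 0$, $h(b)=n/2$, and $t\equiv t_0$ on $doM$; then $f(b,c)=sc^{\sqrt q}+t_0$, and one easily checks that no $(\alpha,\beta)$ can be a common point of the family $\{sc^{\sqrt q}+t_0+bx+cx^2:b\in doM,\,c\in C_b\}$, because the requirement becomes that $sc^{\sqrt q}+\alpha^2 c$ be constant in $c$, which (with $|C_b|>\sqrt q$) forces $s=0$. So I must show case~(3) cannot occur under $|doM|>q/2$. The intersecting condition applied to $g_{b,c}(x)=sc^{\sqrt q}+t_0+bx+cx^2$ and $g_{b',c'}$ with $b\ne b'$ and $c\ne c'$ forces the discriminant $(b-b')^2-4s(c-c')^{\sqrt q+1}$ to be a square in $\F_q$ or zero. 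From $|doM|>q/2$ and $|C_b|,|C_{b'}|>q/2$ together with the elementary pigeonhole (if $|A|+|B|>q$ then $A+B=\F_q$), the differences $b-b'$ and (for any fixed pair $b,b'$) $c-c'$ each cover all of $\F_q$. Writing $\sigma^2=-4s$ and noting that $\N(u)=u^{\sqrt q+1}$ is surjective onto $\F_{\sqrt q}^*$, the requirement becomes: for every $v\in\F_q^*$ and every $m\in\F_{\sqrt q}^*$, $v^2+\sigma^2 m$ is a square in $\F_q$ or zero.

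The contradiction comes from Result~\ref{res1b} applied to the quadratic $v^2+\sigma^2 m$ in $v$: its discriminant $-4\sigma^2 m$ is nonzero, so $\sum_{v\in\F_q}\psi(v^2+\sigma^2 m)=-1$, where $\psi$ is the quadratic character of $\F_q$. Since $\sqrt q$ is odd, every element of $\F_{\sqrt q}^*$ is a square in $\F_q^*$, so the $v=0$ term contributes $\psi(\sigma^2 m)=1$; together with the constraint that the remaining terms are nonnegative, this forces the sum to be $\ge 1$, the desired contradiction. I expect the main difficulty to be precisely case~(3): Lemma~\ref{shortcut} alone does not rule it out because its necessary condition $a^{\sqrt q}b=d^{\sqrt q}a$ is automatically satisfied by the polynomial $(b-b')^2-4s(c-c')^{\sqrt q+1}$, and the substitution in its proof degenerates. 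The essential new input is the hypothesis $|doM|>q/2$, which upgrades the pointwise squareness to a global condition on all of $\F_q^*$, at which point Result~\ref{res1b}'s exact character-sum formula delivers the contradiction.
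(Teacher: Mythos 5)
Your proposal is correct and follows essentially the same route as the paper: split via Lemma~\ref{deg2.1}, exhibit the common point $(0,t_0)$ in case (1) and $(\alpha,\beta)$ with $\alpha^2=-s$ in case (2) (the paper phrases your $\epsilon_b$-argument as ``a point set determining at most two directions is collinear''), and kill case (3) with Result~\ref{res1b}. Your extraction of the case-(3) contradiction is a slight variant — you use $|doM|>q/2$ to get $doM-doM=\F_q$ and then win from the $v=0$ term contributing $+1$, whereas the paper bounds the character sum below directly by $2|doM|-q-1$ over $b\in doM$ — but both hinge on the same character-sum identity and the same hypothesis.
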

	\begin{proof}
		
		According to Lemma \ref{deg2.1}, we consider the following two cases.
		
		\medskip
		
		\fbox{Suppose that there exists some $b'\in dom_o$ such that $s(b')=0$.}
		
		Then $s(d)=0$ and $t(d)=t(b)$ for each $d\in dom_o$. Put $T$ for $t(b)$.
		It follows that for $b\in dom_o$ and $c\in C_b$ the polynomials
		$f(b,c)+bx+cx^2 \in U$ have the shape  $T+bx+cx^2 \in U$ and hence $(0,T)$ is a common point of their graphs.
		
		\medskip
		
		\fbox{Suppose that $s(b)\neq 0$ for each $b\in dom_o$.}
		
		Then $s(b)=s(d)$ and $h(d)=h(b)$ for each $d\in dom_o$. We will
		denote these values by $S$ and $h$, respectively.
		Note that $h=0$, or $h=n/2$.
		
		\underline{When $h=0$.}
		
		Then $(t(b)-t(d))^2=-S(b-d)^2$ for each $b,d \in dom_o$, and so
		\[\left\{ \frac{t(b)-t(d)}{b-d} : b,d\in dom_o,\, b\neq d \right\} \subseteq \{s,-s\},\]
		where $s^2=-S$.
		
		It follows that the point set $\{(b, t(b)) : b \in dom_o\}$ determines  at most two directions.	But there is no point set determining exactly two directions, thus $t$ determines a unique direction, i.e.
		\[t(d)=\gamma d + T,\mbox{ for } d\in dom_o,\]
		where $\gamma$ is a constant satisfying $\gamma^2=s^2=-S$.
		Then for $b\in dom_o$ and $c\in C_b$ the polynomials
		$f(b,c)+bx+cx^2 \in U$ have the shape $Sc+\gamma b + T+bx+cx^2$, so
		$(-\gamma,T)$ is the common point of their graphs.
		
		\underline{When $h=n/2$.}

		Then also $t(b)=t(d)$ for each $d\in dom_o$.
		Then
		\[(b-d)^2-4S(c-e)^{{\sqrt{q}}+1}\]
		has to be a square for each $b,d\in dom_o$ and $c\in C_b$, $e\in C_d$.
		
		Fix $b,c,d$. Note that for $k\in \F_{\sqrt{q}}\setminus \{0\}$ there are $\sqrt{q}+1$ elements $x$ in $\F_q$ such that $x^{\sqrt{q}+1}=k$. Since $e$ runs through  more than $q-\sqrt{q}/2 + 1/2$ values, we have
		\[\F_{\sqrt{q}}\setminus \{0\} \subseteq \{(c-e)^{\sqrt{q}+1} : e \in C_d\}\]
		so
		\begin{equation}
			\label{final}
			(b-d)^2-Sk=b^2-2db+d^2-Sk
		\end{equation}
		has to be a square in $\F_q$ for each $b,d \in dom_o$, $k\in \F_{\sqrt{q}}\setminus \{0\}$.
		As a polynomial in $b$, the discriminant of \eqref{final} is
		\[4d^2-4(d^2-Sk)=Sk.\]
		Recall $S=s(b)\neq 0$, so this discriminant cannot be zero. 		
		By Result \ref{res1b}, for fixed $d\in dom_o$ and $k\in \F_{\sqrt{q}} \setminus \{0\}$ and for the character $\psi$ of order $2$,
		\[ \sum_{b\in \F_q} \psi(b^2-2db+d^2-Sk)=-\psi(1)=-1.\]
		On the other hand, a lower bound for this sum is
		\[|dom_o |-2-|\F_q\setminus dom_o|,\]
		which is at least $2|dom_o|-q-2$, a contradiction when
		$|dom_o|>(q+1)/2$.
		%
	\end{proof}
	
	Next, we prove Theorem \ref{degtwo} when $q$ is odd.
	
	\begin{theorem}
		\label{deg2dispari}
		If $q \geq 53$ is odd and $U$ is a set of $q^2-\varepsilon$,  $\varepsilon < \frac{q\sqrt{q}}{4} - \frac{3q}{8} - \frac{\sqrt q}{8}$, intersecting polynomials of degree at most $2$ over $\F_q$, then the graphs of the functions in $U$ share a common point.
	\end{theorem}
	\begin{proof}
		Let $dom_o$ denote the set of values as before and let us call a polynomial $f(b,c)+bx+cx^2 \in U$ good if $b \in dom_o$.
		According to the previous lemma, if $|dom_o|>(q+1)/2$ the graphs of the good polynomials share a common point. If there are more
		than $\frac{q^2+q}{2}$ good polynomials then Lemma \ref{lotthroughonepointdispari} finishes the proof.
		
		Clearly,
		\[
		|U| \leq |dom_o|q + (q-|dom_o|)(q-\sqrt{q}/2+1/2) = q^2 - (q-|dom_o|)(\sqrt{q}/2 - 1/2).
		\]
		Assume to the contrary that the number of good polynomials is at most $\frac{q^2+q}{2}$, then $|dom_o| < \frac{q^2+q}{2(q-\sqrt{q}/2+1/2)} < \frac{q}{2} +\frac{\sqrt q}{4} +\frac{1}{2}$. Hence:
		\[
		|U| \leq q^2 -\left(\frac{q\sqrt q}{4} -\frac{3q}{8} -\frac{\sqrt q}{8} + \frac{1}{4}\right),
		\]
		which is a contradiction.
	\end{proof}

	\subsection{Intersecting families of polynomials of degree at most $2$, over $\F_q$, $q$  even }


	\begin{lemma}
		Let $U$ be a set of intersecting polynomials of degree at most $2$ and for $t\in \F_q$, $q>2$ even,  define
		\[B_t:=\{ b \in \F_q : f(b,b+t)+bx+(b+t)x^2 \in U\}\]
		and
		\[dom_e:=\{ t \in \F_q : |B_t|\geq q-\sqrt{q}/2\}.\]
		There exist functions $A,B \colon dom_e \rightarrow \F_q$ such that
		for every $b\in B_t$
		\[f(b,b+t)=A(t)b+B(t),\]
		and $A(t) \in \ker \Tr_{q/2}$.
	\end{lemma}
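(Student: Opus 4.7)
The plan is to mirror the strategy of the odd-case lemma but replace the ``discriminant is a square'' condition by the trace criterion \eqref{trace} which is the correct solvability condition for quadratics in characteristic two.

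Fix $t\in Dom$. For any two distinct $b,b'\in B_t$, the polynomials $f(b,b+t)+bx+(b+t)x^2$ and $f(b',b'+t)+b'x+(b'+t)x^2$ are both in $U$, hence their difference has a root in $\F_q$. This difference equals $A+Bx+Cx^2$ with $A=f(b,b+t)-f(b',b'+t)$, $B=b-b'$, $C=b-b'$; in particular $B=C\ne 0$. By \eqref{trace}, the existence of a root forces
\[\Tr_{q/2}\!\left(\frac{AC}{B^2}\right)=\Tr_{q/2}\!\left(\frac{f(b,b+t)-f(b',b'+t)}{b-b'}\right)=0.\]
Therefore the set of directions determined by the partial graph $\{(b,f(b,b+t)) : b\in B_t\}$ is contained in the additive subgroup $\ker \Tr_{q/2}$, which is a proper $\F_2$-subspace of $\F_q$ of codimension one.

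Because $|B_t|\geq q-\sqrt q/2$, I invoke Sz\H onyi's extension theorem \cite[Theorem 4]{TS} (the same tool used in the odd case) to extend $b\mapsto f(b,b+t)$ from $B_t$ uniquely to a function $\tilde\sigma_t\colon \F_q\to \F_q$ determining the same set of directions; in particular the direction set of $\tilde\sigma_t$ remains inside $\ker \Tr_{q/2}$. Since $q>2$, Theorem \ref{additiveCarlitz} (applied with the proper $\F_2$-subspace $U=\ker \Tr_{q/2}$) now yields $\tilde\sigma_t(b)=A(t)b+B(t)$ for some $A(t),B(t)\in \F_q$, and restricting back to $B_t$ gives the claimed affine formula $f(b,b+t)=A(t)b+B(t)$.

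Finally, since $\tilde\sigma_t$ is affine with slope $A(t)$, the single direction it determines is exactly $A(t)$; being in the direction set it lies in $\ker \Tr_{q/2}$, which is precisely $A(t)\in \ker \Tr_{q/2}$ as required. The main technical point to verify is that the threshold $|B_t|\geq q-\sqrt q/2$ actually meets the hypotheses of Sz\H onyi's extension theorem in the characteristic-two setting; once this is granted, everything else is bookkeeping, and the characteristic-two version is in fact cleaner than the odd case because we directly land inside an $\F_2$-subspace and can quote Theorem \ref{additiveCarlitz} instead of having to analyze power maps via Result \ref{res2}.
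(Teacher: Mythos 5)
Your proposal is correct and follows essentially the same route as the paper: derive $\Tr_{q/2}\bigl((f(b,b+t)-f(b',b'+t))/(b-b')\bigr)=0$ from the quadratic-root criterion \eqref{trace}, conclude the direction set of $b\mapsto f(b,b+t)$ lies in $\ker\Tr_{q/2}$, extend via Sz\H onyi's theorem, and apply Theorem \ref{additiveCarlitz}. The only cosmetic difference is that the paper first writes down the general intersection condition $H(b,c,d,e)$ and then specializes to $c=b+t$, $e=d+t$, whereas you compute the difference polynomial directly for the restricted family.
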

	\begin{proof}
		Consider $F(x)=f(b,c)+b x + c x^2$ and $G(x)=f(d,e)+d x + e x^2$. Then the graphs of $F$ and $G$ share a common point if and only if $F-G$ has a root in $\F_q$, that is, $b=d$ or 
		\[H(b,c,d,e):=\Tr_{q/2}\left(\frac{(c+e)(f(b,c)+f(d,e))}{(b+d)^2}\right)=0.\]
		Then for each $t\in \F_q$, $b,d \in B_t$, $b\neq d$,
		\[H(b,b+t,d,d+t)=\Tr_{q/2}\left( \frac{(b+d)(f(b,b+t)+f(d,d+t))}{(b+d)^2}  \right)=0.\]
		Simplifying by $b+d$ yields
		\[\Tr_{q/2}\left( \frac{f(b,b+t)+f(d,d+t)}{b+d}  \right)=0.\]
		Define $R_t \colon B_t \rightarrow \F_q$ as $R_t(x)=f(x,x+t)$.
		For each $x,y\in B_t$, $x\neq y$, it holds that
		\[\Tr_{q/2}\left( \frac{R_t(x)+R_t(y)}{x+y}  \right)=0.\]
		In particular, the set of directions determined by the graph of $R_t$ is contained in $\ker \Tr_{q/2}$, and hence it has size at most $q/2$.
		
		From now on assume $t\in dom_e$ and hence $|B_t|\geq q-\sqrt{q}/2$.
		By results of Sz\H onyi \cite{TS}, there exists a unique extension $\tilde{R_t} \colon \F_q \rightarrow \F_q$ of $R_t$ such that the set of directions determined by $\tilde{R_t}$ is the same as the set of directions determined by the point set $\{(x, R_t(x)) : x\in B_t\} \subseteq \AG(2,q)$. So the set of directions determined by $\tilde{R_t}$ is contained in $\ker \Tr_{q/2}$ and hence by Theorem \ref{additiveCarlitz} there exist $A(t), B(t) \in \F_q$ such that $\tilde{R_t}(x)=A(t)x+B(t)$ with $\Tr_{q/2}(A(t))=0$.
		It follows that for $b\in B_t$ we have
		\[R_t(b)=f(b,b+t)=A(t)b+B(t).\]
	\end{proof}
	
	\begin{lemma}
		Let $U$ be a set of intersecting polynomials of degree at most $2$ and define $B_t$, $dom_e$ and the functions $A$ and $B$ as in the previous lemma.
		Then there exist $\alpha, \beta \in \F_q$, $q \geq 8$, such that $A(t)=\alpha^{q/2}+\alpha$ and $B(t)=\alpha t + \beta$ for each $t\in dom_e$.
	\end{lemma}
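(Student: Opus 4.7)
The plan is to apply the intersection trace condition \eqref{trace} pairwise to the polynomials $F_{b,t}(x) := f(b,b+t) + bx + (b+t)x^2$ for $b \in B_t$, $t \in Dom$, and split the resulting identity into a part linear in $b$ (controlling $A$) and a $b$-independent part (controlling $B$). Fix distinct $t, s \in Dom$ and write $d = b + u$ with $u \ne 0$. The trace condition on $F_{b,t} - F_{d,s}$ takes the form $\Tr_{q/2}(Lb + K) = 0$ with
\[
L = \frac{(u+t+s)(A(t)+A(s))}{u^2},
\]
and with $K$ independent of $b$. As $b$ ranges over $B_t \cap (B_s + u)$, we have at least $2(q - \sqrt q/2) - q = q - \sqrt q$ admissible values, which exceeds $q/2$ for $q>4$. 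Since a nonzero affine trace function on $\F_q$ vanishes on exactly $q/2$ elements, we conclude $L = 0$ and $\Tr_{q/2}(K) = 0$. Choosing any $u \notin \{0, t+s\}$ turns $L = 0$ into $A(t) = A(s)$, so $A$ is constant on $Dom$. Call this constant $M$; since $\Tr_{q/2}(M) = 0$, pick $\alpha \in \F_q$ with $\alpha + \alpha^{q/2} = M$ (such $\alpha$ exists by Artin--Schreier, and there are exactly two choices, differing by $1$).

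Next I exploit $\Tr_{q/2}(K) = 0$ for every $u \in \F_q^*$. Writing $\tau = t+s$ and $C = B(t)+B(s)$, substituting $A(s) = M$ and using $\Tr_{q/2}(M)=0$, the condition reads
\[
\Tr_{q/2}\!\left(\frac{C + \tau M}{u} + \frac{\tau C}{u^2}\right) = 0 \quad \text{for all } u \in \F_q^*.
\]
Using the Frobenius identity $\Tr_{q/2}(Y/u^2) = \Tr_{q/2}(Y^{q/2}/u)$ to combine the two terms and invoking the fact that $\Tr_{q/2}(X/u) = 0$ for all $u \ne 0$ forces $X = 0$, I obtain $C + \tau M + (\tau C)^{q/2} = 0$. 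Setting $G(t,s) := C/\tau$ and using $\tau^q = \tau$, this simplifies to $G + G^{q/2} = M$. Since the kernel of $X \mapsto X + X^{q/2}$ on $\F_q$ is $\{X : X^2 = X\} = \{0, 1\}$, the solution set is precisely $\{\alpha, \alpha + 1\}$, so $G(t,s) \in \{\alpha, \alpha + 1\}$ for every pair of distinct $t, s \in Dom$.

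I expect the main technical step to be promoting this pointwise dichotomy to a single global value. For any three distinct $t_1, t_2, t_3 \in Dom$, the identity $\sum_{i<j}(B(t_i) + B(t_j)) = 2\sum_i B(t_i) = 0$ rewrites as $\sum_{i<j} G(t_i, t_j)(t_i + t_j) = 0$, and after cancelling the $\alpha$-contribution (via $\sum_{i<j}(t_i + t_j) = 2\sum_i t_i = 0$) reduces to $\sum_{i<j} \epsilon_{ij}(t_i + t_j) = 0$ with $\epsilon_{ij} = G(t_i, t_j) + \alpha \in \{0, 1\}$. Distinctness of the $t_i$ forbids exactly one or exactly two $\epsilon_{ij}$ from being $1$, leaving all-zero or all-one as the only consistent patterns, and any two overlapping triples must agree on their common pair. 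Hence $G$ is globally constant on pairs in $Dom$, equal to $\alpha$ or $\alpha + 1$; replacing $\alpha$ by $\alpha+1$ if necessary (both satisfy $X + X^{q/2} = M$), we obtain $B(t) + B(s) = \alpha(t+s)$ for all distinct $t, s \in Dom$, equivalently $B(t) = \alpha t + \beta$ for a single $\beta \in \F_q$. The corner cases $|Dom| \leq 2$ follow by direct choice of $\alpha$ and $\beta$ fitting the at most two data points.
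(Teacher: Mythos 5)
Your proof is correct in substance and takes a genuinely different route from the paper's in both halves. To get $A(t)=A(s)$, the paper fixes $b\in B_s$ and lets $d$ range over $B_t$, expands the trace condition in powers of $1/(b+d)$, uses $\Tr_{q/2}(z)=\Tr_{q/2}(z^2)$ to collapse everything onto $1/(b+d)^2$, and concludes that the ($d$-independent) numerator vanishes; this yields a quadratic polynomial identity in $b$ whose coefficients give $A(s)=A(t)$ and the relation \eqref{eq1lemma}. You instead fix the difference $u=b+d$ and vary $b$, which isolates $A(t)+A(s)$ as the coefficient of the part linear in $b$ and gives it at once; varying $u$ afterwards recovers the same constraint on $B(t)+B(s)$ (your $G+G^{q/2}=M$ is exactly \eqref{eq1lemma} divided by $(s+t)^2$ and raised to the power $q/2$). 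The real divergence is in the second half: the paper sums two instances of \eqref{eq1lemma} over a third point $t'$ to linearize $B$ directly and then reads off the slope, whereas you solve the Artin--Schreier equation pointwise to get the dichotomy $G(t,s)\in\{\alpha,\alpha+1\}$ and then run a parity argument over triples using $\sum_{i<j}(B(t_i)+B(t_j))=0$ in characteristic $2$. Your route is a bit longer but makes transparent why the slope is determined only up to the two-element kernel of $X\mapsto X+X^{q/2}$, an ambiguity absorbed into the choice of $\alpha$; the paper's summation trick is shorter but hides this. One caveat: your counting is tight at $q=4$ (there $q-\sqrt q=q/2$ and $q-2=q/2$, so neither ``more than $q/2$ values'' step closes), while the lemma is stated for $q>2$; the paper's own bound $|B_t\setminus\{b\}|>q/2$ is essentially equally tight at $q=4$, so this is a shared edge case rather than a defect specific to your argument, but it deserves a remark if $q=4$ is to be covered.
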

	\begin{proof}
		If $|dom_e|=1$, then the assertion is trivial, so assume $|dom_e|\geq 2$ and take any $s,t \in dom_e$.
		Fix some $b\in B_s$.
		Then for each $d \in B_t \setminus \{b\}$,
		\[H(b,b+s,d,d+t)=\Tr_{q/2}\left(\frac{(b+s+d+t)(f(b,b+s)+f(d,d+t))}{(b+d)^2}\right)=0,\]
		that is,
		\[\Tr_{q/2}\left(\frac{(b+s+d+t)(f(b,b+s)+A(t)d+B(t))}{(b+d)^2}\right)=0,\]
		i.e.,
		\[\Tr_{q/2}\left(A(t)+\frac{f(b,b+s)+B(t)+A(t)b+A(t)(s+t)}{b+d}+(s+t)\frac{f(b,b+s)+B(t)+A(t)b}{(b+d)^2}\right)=0.\]
		Applying $\Tr_{q/2}(A(t))=0$ and $\Tr_{q/2}(z)=\Tr_{q/2}(z^2)$ for each $z\in \F_q$, we obtain for each $d\in B_t \setminus \{b\}$, $d \neq b$
		\[\Tr_{q/2}\left(\frac{f^2(b,b+s)+B^2(t)+A^2(t)b^2+A^2(t)(s+t)^2+(s+t)(f(b,b+s)+B(t)+A(t)b)}{(b+d)^2}\right)=0.\]
		The numerator does not depend on $d$, while the denominator ranges over a subset of $\F_q^*$ of size
		$|B_t\setminus \{b\}|>\deg \Tr_{q/2} = q/2$ and hence this is possible only if
		\[f^2(b,b+s)+B^2(t)+A^2(t)b^2+A^2(t)(s+t)^2+(s+t)(f(b,b+s)+B(t)+A(t)b)=0.\]
		Since $f(b,b+s)=A(s)b+B(s)$, this is equivalent to
		\[(A(s)b+B(s))^2+B^2(t)+A^2(t)b^2+A^2(t)(s+t)^2+(s+t)(A(s)b+B(s)+B(t)+A(t)b)=0,\]
		that is,
		\[b^2(A^2(s)+A^2(t))+b(A(s)+A(t))(s+t)+(B(s)+B(t))(B(s)+B(t)+s+t)+A^2(t)(t+s)^2=0.\]	
		Since this holds for every $b\in B_s$, and $|B_s|>2$, it follows that as a polynomial of $b$, this is the zero polynomial, so $A(s)=A(t)$ and
		\begin{equation}
			\label{eq1lemma}
			(B(s)+B(t))(B(s)+B(t)+s+t)+A^2(t)(t+s)^2=0.
		\end{equation}
		Since $\Tr_{q/2}(A(t))=0$, and $A(t)$ is a constant function, this proves the existence of $\alpha'\in \F_q$ such that $A(x)=\alpha'^{q/2}+\alpha'$ for each $x\in dom_e$.
		If $|dom_e|=2$, then clearly $B$ is linear, so assume $|dom_e|\geq 3$ and take some $t'\in dom_e \setminus \{s,t\}$. The same arguments show
		\begin{equation}
			\label{eq2lemma}
			(B(s)+B(t'))(B(s)+B(t')+s+t')+A^2(t')(t'+s)^2=0.
		\end{equation}
		Summing up \eqref{eq1lemma} and \eqref{eq2lemma} we obtain
		\[B(s)(t+t')+s(B(t)+B(t'))+B^2(t)+B^2(t')+B(t)t+B(t')t'+(\alpha'^2+\alpha')(t+t')^2=0,\]
		so for $x\in dom_e \setminus \{t,t'\}$ it holds that
		\[B(x)=x \frac{B(t)+B(t')}{t+t'}+\frac{B^2(t)+B^2(t')+B(t)t+B(t')t'}{t+t'}+(\alpha'^2+\alpha')(t+t'),\]
		and from \eqref{eq1lemma} (with $s=t'$) one obtains the same for $x\in \{t,t'\}$, so $B$ is linear.
		Put $B(x)=\gamma x + \beta$, then from \eqref{eq1lemma}
		\[\gamma(s+t)(\gamma(s+t)+(s+t))=(\alpha'^2 + \alpha')(s+t)^2,\]
		so	$\gamma^2+\gamma=\alpha'^2+\alpha'$ which proves $\gamma=\alpha'$ or $\gamma=\alpha'+1$. Now, if $\gamma=\alpha'$ then we set $\alpha:=\alpha'$ whereas if $\gamma=\alpha'+1$ we set $\alpha:=\alpha'+1$. Since $\alpha'^{q/2}+\alpha'=(\alpha'+1)^{q/2}+\alpha'+1$, our lemma follows.
	\end{proof}
	
	\begin{corollary}
		If $g(x)=f(b,c)+bx+cx^2 \in U$ and $b+c \in dom_e$, then $g(\alpha^{q/2})=\beta$.
	\end{corollary}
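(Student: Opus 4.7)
The plan is to unwind the definitions and simply substitute the explicit form of $A(t),B(t)$ from the previous lemma, then exploit characteristic~$2$. Concretely, set $t=b+c$. Since $q$ is even we have $c=b+t$, so the hypothesis $g(x)=f(b,c)+bx+cx^2\in U$ together with $b+c\in Dom$ is exactly the statement $b\in B_t$ with $t\in Dom$. By the previous lemma we may therefore write
\[
 f(b,c)=f(b,b+t)=A(t)\,b+B(t)=(\alpha^{q/2}+\alpha)\,b+\alpha\,t+\beta.
\]

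Next I substitute $t=b+c$ and simplify in characteristic~$2$, where every term of the form $2x$ vanishes. This gives
\[
 f(b,c)=\alpha^{q/2}b+\alpha b+\alpha(b+c)+\beta=\alpha^{q/2}b+\alpha c+\beta,
\]
the two copies of $\alpha b$ cancelling. Now evaluate $g$ at $x=\alpha^{q/2}$, noting that since $\alpha\in\F_q$ we have $(\alpha^{q/2})^2=\alpha^q=\alpha$:
\[
 g(\alpha^{q/2})=f(b,c)+b\alpha^{q/2}+c\alpha=\bigl(\alpha^{q/2}b+\alpha c+\beta\bigr)+\alpha^{q/2}b+\alpha c=\beta,
\]
again using that doubled terms vanish in characteristic~$2$.

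There is essentially no hard step here: the work has all been done in proving the two preceding lemmas, which pin down $A(t)=\alpha^{q/2}+\alpha$ and $B(t)=\alpha t+\beta$. The only thing to verify is the compatibility between the parametrisation $(b,c)$ used in the statement and the parametrisation $(b,b+t)$ used in the lemma, i.e. identifying $t$ with $b+c$; once that identification is made, the assertion follows by a single line of characteristic-$2$ arithmetic together with $(\alpha^{q/2})^2=\alpha$.
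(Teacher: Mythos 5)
Your proof is correct and follows essentially the same route as the paper: identify $t=b+c$ so that the hypothesis reads $b\in B_t$ with $t\in Dom$, substitute $A(t)=\alpha^{q/2}+\alpha$ and $B(t)=\alpha t+\beta$ from the preceding lemmas, and evaluate at $x=\alpha^{q/2}$ using $(\alpha^{q/2})^2=\alpha$ and cancellation in characteristic $2$. The only (cosmetic) difference is that you simplify $f(b,c)$ to $\alpha^{q/2}b+\alpha c+\beta$ before evaluating, while the paper substitutes directly; the computation is the same.
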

	\begin{proof}
		Put $t=b+c$. Then $t\in dom_e$ and hence
		\[f(b,c)=f(b,b+t)=A(t)b+B(t)=\alpha^{q/2}b+\alpha b+\alpha t + \beta,\]
		hence
		\[g(\alpha^{q/2})=\alpha^{q/2}b+\alpha b+\alpha t + \beta+b\alpha^{q/2}+(b+t)\alpha=\beta.\]
	\end{proof}

	Finally, we prove Theorem \ref{degtwo} for $q$ even.
	
	\begin{theorem}
		\label{deg2p}
		If $q\geq 8$ is even and $U$ is a set of $q^2-\varepsilon$,  $\varepsilon < \frac{q\sqrt{q}}{4} - \frac{q}{8} - \frac{\sqrt q}{8}$, intersecting polynomials of degree at most $2$ over $\F_q$, then the graphs of the functions in $U$ share a common point.
	\end{theorem}
	\begin{proof}
		Let $dom_e$ denote the set of values as before and let us call a polynomial $f(b,c)+bx+cx^2 \in U$ good if $b+c \in dom_e$.
		According to the previous corollary, the graphs of the good polynomials share a common point. If there are more
		than $\frac{q^2}{2}$ good polynomials then Lemma \ref{lotthroughonepoint} finishes the proof.
		
		Clearly,
		\[
		|U| \leq |dom_e|q + (q-|dom_e|)(q-\sqrt{q}/2) = q^2 - (q-|dom_e|)\sqrt{q}/2.
		\]
		Assume to the contrary that the number of good polynomials is at most $\frac{q^2}{2}$, then $|dom_e| \leq \frac{q^2}{2(q-\sqrt{q}/2)} < \frac{q}{2} +\frac{\sqrt q}{4} +\frac{1}{4}$. Hence:
		\[
		|U| < q^2 -\left(\frac{q\sqrt q}{4} -\frac{q}{8} -\frac{\sqrt q}{8}\right),
		\]
		which is a contradiction.
	\end{proof}

	\subsection{Intersecting families of polynomials of degree at most $k>2$}
	
	{\bf{Theorem \ref{general}.}} {\em If $U$ is a set of more than $q^{k}-q^{k-1}$ intersecting polynomials over $\F_q$, $q \geq 53$ when $q$ is odd and $q \geq 8$ when $q$ is even,
		and of degree at most $k$, $k > 1$, then there exist $\alpha,\beta \in \F_q$ such that $g(\alpha)=\beta$ for all $g\in U$. Furthermore, $U$  can be uniquely extended to a
		family of  $q^k$ intersecting polynomials of degree at most $k$ over $\F_q$.}

	\begin{proof} let $U$ be a set of more than $q^{k}-q^{k-1}$ intersecting polynomials over $\F_q$ and of degree at most $k$, $k > 1$.
		First we show  that there exist $\alpha,\beta \in \F_q$ such that $g(\alpha)=\beta$ for all $g\in U$.
		We prove this by induction.
		
		For $k=2$, this is true by Theorem \ref{degtwo}. Now assume that it is true for $k-1$ and we want to prove it for $k$. By the pigeon hole principle there must be a value $c$, such that there are  more than $q^{k-1}-q^{k-2}$ polynomials $h_i$ in $U$ whose coefficient in $x^k$ is $c$. Now consider the family of polynomials in the form of $\{h_i - cx^k\}$. Clearly, this is an intersecting family of polynomials of degree at most $k-1$. So by the induction hypothesis, there are values $\alpha$ and $\beta$ so that for every $i$, $(h_i - cx^k)(\alpha) = \beta$ and hence of course $h_i(\alpha) = \beta+c\alpha^k$ and so Lemma \ref{equalc} finishes the  proof of the first part.
		
		Next, we will prove that $U$  can be uniquely extended to a family of  $q^k$ intersecting polynomials of degree at most $k$ over $\F_q$.
		Hence, let $\cF$ and $\cF'$ be two intersecting families of size $q^k$, both of them containing $U$.
		Then, there exist $\alpha,\alpha',\beta, \beta' \in \F_q$ such that
		$g(\alpha)=\beta$ for all $g\in \cF$ and $g(\alpha')=\beta'$ for all $g\in \cF'$. The polynomials in $U$ are in $\cF \cap \cF'$, a contradiction unless $(\alpha,\beta)=(\alpha',\beta')$, since there are at most $q^{k-1}<|U|$ polynomials of degree at most $k$, whose graph contains two distinct, fixed points. Theorem  \ref{general} follows.
	\end{proof}

	\section{Large intersecting families whose graphs do not share a common point}
	
	The following construction was drawn to our attention in a talk by Sam Adriaensen.
	Note that it shows the sharpness of the lower bound on $|U|$ in Lemma \ref{lotthroughonepoint}.
	
	\begin{example}[Hilton-Milner type]
		Pick a point $P := (\alpha, \beta)$ and a line $e:= \{(x, vx +w): x \in \F_q\} $ in ${\rm AG}(2, q)$, so that $\beta \neq v\alpha +w$.
		Let $U'$ be the set of those polynomials over $\F_q$, which are of the form $h(x) = cx^2+bx+a$ and for which $h(\alpha) = \beta$ and
		there exist values $\alpha'$ and $\beta'$ so that $h(\alpha') = \beta'$ and $\beta' = v\alpha' +w$. The
		set $U = U' \cup \{e \}$ is a set of intersecting polynomials of degree at most $2$ over $\F_q$.
		The size of $U$ is $\frac{q^2+q}{2}$ and clearly there exist no values $s,t\in \F_q$ so that for every
		polynomial $f \in U$, $f(s) = t$.
	\end{example}
	
	\begin{proof}
		Clearly, we may assume that $P = (0, 1)$ and $e=\{(x, 0): x \in \F_q\}$. Then $a=1$ for the polynomials in $U'$.
		Pick a point $R:=(u, 0)$ from $e$. The number of polynomials $g$ in $U'$, so that $g(u) = 0$ is $0$ if $u=0$, $q$ otherwise. Hence if we count the polynomials of $U'$ corresponding to $R$ when $R$ runs on the points of $e$, we see $q(q-1)$ polynomials. But most of the polynomials in $U'$ will correspond to two different points $R$ and $R'$ of $e$.
		Actually, only the polynomials which are of the form $bx+1$ ($b\in \F_q^*$) and polynomials of the  form $c^{-2}(x+c)^2$ ($c\in \F_q^*$)
		in $U'$ will correspond to exactly one point in $e$. Hence $|U'| = \frac{q(q-1) - 2(q-1)}{2} + 2(q-1) = \frac{q^2+q}{2} - 1$
		and so $|U| =  \frac{q^2+q}{2}$.
	\end{proof}
	
	\begin{example}
		Let $q$ be odd. There is a family $\cM$ of intersecting polynomials of degree at most $2$ such that
		$|\cM|=\frac{q^2-q+1}{2}$ and there exists $f\in \cM$ with the property that $|U_f \cap U_g|=1$ for each $g\in \cM,\,g\neq f$.
	\end{example}
	\begin{proof}
		Choose a polynomial $f(x)=Ax^2+Bx+C$ and let $\square_q$ be the set of non-zero squares in $\F_q$.
		Let
		\[\cP=\left\{a_i x^2 + b_i x +C-\frac{(B-b_i)^2}{4(A-a_i)} : A-a_i\in \square_q ,\, b_i\in \F_q \right\}.\]
		Note that $|\cP|=q(q-1)/2$.
		
		%
		If $(a_i,b_i)$ and $(a_j,b_j)$ correspond to two elements of $\cP$ then the graphs of the corresponding polynomials meet each other if and only if
		\[(b_i-b_j)^2-4(a_i-a_j)\left( \frac{(B-b_j)^2}{4(A-a_j)}-\frac{(B-b_i)^2}{4(A-a_i)}  \right)=\]
		\[ \frac{(a_iB-a_jB-Ab_i+a_jb_i+Ab_j-a_ib_j)^2}{(A-a_i)(A-a_j)}\]
		is a (possibly zero) square in $\F_q$.
		This certainly holds since both $(A-a_i)$ and $(A-a_j)$ are squares.
		Hence $\cP$ is an intersecting family.
		Finally, we prove that $\cM=\cP\cup \{f\}$ is also an intersecting family. We will do this by proving that for each $g\in \cP$, $U_g$ meets $U_f$ in a unique point.
		So assume $g(x)=ax^2+bx+C-\frac{(B-b)^2}{4(A-a)}$.
		It is easy to see that the discriminant of
		$f-g$ is zero and hence the result follows.
	\end{proof}
	
	\subsection*{Acknowledgement}
	
	We are extremely grateful for the reviewer's thorough reading and valuable comments. An inaccuracy spotted out by the reviewer 
	led us to the discovery of Theorem \ref{teonew}.

\end{document}